\numberwithin{equation}{section}
\newtheorem{teo}{Theorem}[section]
\newtheorem{prop}[teo]{Proposition}
\newtheorem{lema}[teo]{Lemma}
\newtheorem{coro}[teo]{Corollary}
\theoremstyle{definition}
\newtheorem{defi}[teo]{Definition}
\newtheorem{notation}[teo]{Notation}
\theoremstyle{remark}
\title{Phase relations and pyramids}
\author{Miguel N. Walsh}
\address{Departamento de Matemática e IMAS-CONICET, Facultad de Ciencias Exactas y Naturales, Universidad de Buenos Aires, 1428 Buenos Aires, Argentina}
\email{mwalsh@dm.uba.ar}
\begin{document}

\def\F{\mathbb{F}}
\def\Fqn{\mathbb{F}_q^n}
\def\Fq{\mathbb{F}_q}
\def\Fp{\mathbb{F}_p}
\def\Di{\mathbb{D}}
\def\E{\mathbb{E}}
\def\Z{\mathbb{Z}}
\def\Q{\mathbb{Q}}
\def\C{\mathbb{C}}
\def\R{\mathbb{R}}
\def\N{\mathbb{N}}
\def\H{\mathbb{H}}
\def\P{\mathbb{P}}
\def\T{\mathbb{T}}
\def\modp{\, (\text{mod }p)}
\def\modN{\, (\text{mod }N)}
\def\modq{\, (\text{mod }q)}
\def\modone{\, (\text{mod }1)}
\def\ZN{\mathbb{Z}/N \mathbb{Z}}
\def\Zp{\mathbb{Z}/p \mathbb{Z}}
\def\Zan{a^{-n}\mathbb{Z}/ \mathbb{Z}}
\def\Zal{a^{-l} \Z / \Z}
\def\Pr{\text{Pr}}
\def\leftsize{\left| \left\{}
\def\rightsize{\right\} \right|}

\maketitle

\begin{abstract}
We develop tools to study the averaged Fourier uniformity conjecture and extend its known range of validity to intervals of length at least $\exp(C (\log X)^{1/2} (\log \log X)^{1/2})$.
\end{abstract}

\bigskip

\section{Introduction}

In this article we shall establish the following result.

\begin{teo}
\label{1}
Given $0 < \rho < 1$ and $\eta > 0$, there exists some $C>0$ such that, for every $\exp(C (\log X)^{1/2} (\log \log X)^{1/2} ) \le H \le X^{1/2}$ and every complex-valued multiplicative function $g$ with $|g| \le 1$ satisfying
\begin{equation}
\label{A}
 \int_X^{2X} \sup_{\alpha} \left| \sum_{x \le n \le x+H} g(n) e(\alpha n) \right| dx \ge \eta H X,
 \end{equation}
we have $\mathbb{D} (g; CX^2/H^{2-\rho},C) \le C$.
\end{teo}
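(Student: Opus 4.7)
The plan is to extract phase relations from the hypothesis \eqref{A} and then propagate them via a pyramid scheme to pin down the multiplicative structure of $g$.

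First, I would measurably select, for each $x$ in a positive-measure subset $E \subset [X, 2X]$, a phase $\alpha(x) \in \T$ nearly achieving the supremum in \eqref{A}, so that
\[
\left| \sum_{x \le n \le x+H} g(n) e(\alpha(x) n) \right| \gg_\eta H \quad \text{for } x \in E.
\]
This converts the $L^1 L^\infty$ hypothesis into a correlation statement against a measurable frequency function, and is the natural starting point for any contagion-style argument.

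Next, I would exploit multiplicativity through a Tur\'an--Kubilius / Dirichlet-hyperbola decomposition. Factoring typical $n$ in the window as $n = pm$ with $p$ in a chosen dyadic prime range, the identity $g(pm) = g(p)g(m)$ transforms the inner sum into a correlation of $g(m)$ with $g(p)e(\alpha(x)pm)$ on shorter subwindows. Cauchy--Schwarz in $p$ (to eliminate the unknown values $g(p)$) together with pigeonholing in the residual variable should extract, for many quadruples $(p_1, p_2, x_1, x_2)$, approximate phase relations of the form $p_1 \alpha(x_1) \equiv p_2 \alpha(x_2) \pmod{O(1/H)}$. These relations are the numerical content that the pyramid will exploit.

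The pyramid step then propagates these relations across many scales: each level uses primes of larger size and refines the discretisation of $\alpha$, and stacking the levels should force $\alpha(x)$ to approximate the rigid shape $a/q + t \log x$ on a positive-density subset of $E$, with $q$ and $|t|$ polynomially bounded in $X^2/H^{2-\rho}$. Combining this rigidity with the local correlations then yields correlation of $g(n)$ with $\chi(n) n^{it}$ on many subintervals, which on summing upgrades to the pretentious bound $\mathbb{D}(g; CX^2/H^{2-\rho}, C) \le C$ claimed in the theorem.

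I expect the main obstacle to be the quantitative bookkeeping in the pyramid. Each level loses logarithmic factors, and the threshold on $H$ must simultaneously absorb the height of the pyramid and the logarithmic width of the prime ranges used at every level. Balancing a pyramid of height $\sim (\log \log X)^{1/2}$ against prime ranges of logarithmic size $\sim (\log X)^{1/2}$ is precisely what should force the cut-off $H \ge \exp(C (\log X)^{1/2} (\log \log X)^{1/2})$; the delicate point will be guaranteeing that the phase relations remain compatible and that the exceptional set $E$ retains positive density through the full height of the construction.
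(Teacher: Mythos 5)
Your opening and closing moves do track the paper's framework: the measurable selection of a near-extremal phase $\alpha(x)$, the use of multiplicativity plus Cauchy--Schwarz/Elliott and the large sieve to extract relations $p_1\alpha(x_1)\approx p_2\alpha(x_2)$ for many quadruples, and the appeal to Matom\"aki--Radziwi\l\l-type results to convert local correlation with $e(an/q)n^{iT}$ into the pretentious bound are exactly the ingredients the paper imports wholesale (Lemma \ref{base} and Proposition \ref{2}). But the heart of the theorem is untouched in your sketch: you never explain how pairwise relations, each carrying an error $\sim PK/H$ that a priori degrades when multiplied through by primes, are assembled so that one \emph{fixed} pair $(q,T)$ works for $\gtrsim X/H^{1+\rho}$ intervals. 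In the paper this requires three mechanisms you do not supply: (i) retaining the archimedean datum $|x_1/p-x_2/q|\lesssim H/(PK)$, which through Lemmas \ref{uni}, \ref{alf} and \ref{morealf} keeps the pyramid errors uniformly of size $k\,K/H$ instead of blowing up along the path; (ii) a connectedness argument (Lemma \ref{olden}, Corollary \ref{ls28}, Lemma \ref{pairs}) producing a fixed $(x_0,\alpha_0)$ joined to $\gtrsim X/H^{1+\delta}$ points by \emph{two prime-disjoint} paths of length $k_0\sim \log X/\log P$; and (iii) the closed-loop trick: concatenating the two paths gives a path from $(x_0,\alpha_0)$ to itself, and the top element $\alpha_y$ of its pyramid satisfies $\|(\prod_i p_iq_i'-\prod_i q_ip_i')\alpha_y\|_{Q_y}\lesssim kK/H$, which is what forces the shape $a/d\cdot Q_y+T/x$ with denominator $d\lesssim H^\delta$; a gcd/pigeonhole over the moduli $Q_y$, which are products of many auxiliary primes $p'\in[P',2P']$, then fixes a single $q$ and $T$. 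Working only modulo $1$ with errors $O(1/H)$, as your step two proposes, does not produce these moduli, and asserting that ``stacking the levels should force $\alpha(x)$ to approximate the rigid shape'' is asserting the conclusion of precisely the argument that has to be invented.

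Your quantitative picture of the threshold is also off. The pyramid does not ``use primes of larger size at each level'': all path primes lie in one range $[P,2P]$ with $P\le H^{\varepsilon}$, and the losses are factors $C^{k}(\log P)^{O(k)}$ coming from counting and comparing paths of length $k\sim \log X/\log H$ (the length needed for connectedness), so at the threshold the height is $\sim\bigl(\log X/\log\log X\bigr)^{1/2}$, not $(\log\log X)^{1/2}$; the condition $(\log H)^{O(\log X/\log H)}\le H^{\delta}$ is what yields $H\ge \exp(C(\log X)^{1/2}(\log\log X)^{1/2})$. Finally, a small but real slip: the rigidity is on the frequency, $\alpha(x)\approx a_x/q+T/x$ (giving correlation with $e(a_xn/q)n^{2\pi iT}$), not $a/q+t\log x$.
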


Here, we are writing $\mathbb{D}(g;T,Q)$ for the 'pretentious' distance \cite{GS}:
$$ \mathbb{D}(g;T,Q) = \inf \left( \sum_{p \le T} \frac{1 - \text{Re}(g(p)p^{it}\chi(p))}{p} \right)^{1/2} ,$$
with the infimum taken over all $|t| \le T$ and all Dirichlet characters of modulus at most $Q$. In particular, Theorem \ref{1} implies that (\ref{A}) cannot hold for the Möbius and Liouville functions.

Theorem \ref{1} improves on estimates obtained in \cite{MRT} and \cite{MRTTZ}, where it was shown that for any $\varepsilon > 0$ the result holds for intervals of length at least $X^{\varepsilon}$ and $\exp((\log X)^{5/8+\varepsilon})$, respectively. The methods of this article should adapt to nilsequences, thus yielding corresponding progress on the higher uniformity conjecture. It is known that after passing to logarithmic averages, establishing this conjecture for intervals of length at least $(\log X)^{\varepsilon}$, for every $\varepsilon >0$, would imply both Chowla's and Sarnak's conjectures \cite{MRTTZ,T}.

We will proceed through the same general framework as in previous articles \cite{MRT, MRTTZ,W}, where it is shown that a function $g$ satisfying (\ref{A}), for the corresponding values of $H$, must correlate with $n \mapsto e(an/q) n^{2 \pi i T}$ on many of these intervals and for certain fixed choices of $T$ and $q$. The results of \cite{MR,MR2, MRT2} then imply that $g$ must behave globally like a function of this form, yielding the desired conclusion.

To obtain this local correlation one exploits Elliott's inequality and the large sieve (see \cite{MRT, MRTTZ}), which combined guarantee that under (\ref{A}) we may associate to many intervals $I \subseteq [X,2X]$ a frequency $\alpha_I \in \R$ in such a way that we may find many quadruples consisting of a pair of intervals $I=[x,x+H], J= [y,y+H]$ and a pair of primes $p,q \le H$, such that $|x/p-y/q|$ is small and $p \alpha_I$ close to $q \alpha_J$ mod $Q$, for some large integer $Q$. The problem then becomes that of showing that these relations force $\alpha_I$ to be close mod $1$ to $\frac{m_I}{q} + \frac{T}{x}$ for certain $m_I,q \in \N$ and $T \in \R$, which would then imply the desired local correlation.

To accomplish this we begin in Section \ref{phase} by studying 'pre-paths' consisting of a sequence of elements $\alpha_1, \ldots, \alpha_{k+1} \in \Z / Q \Z$ and primes $p_1, \ldots, p_k, q_1, \ldots, q_k$ with $p_i \alpha_i$ close to $q_i \alpha_{i+1}$, for every $1 \le i \le k$. Building on ideas of \cite{W}, we show how to construct certain 'pyramids' of frequencies that help relate the elements of the sequence and, in particular, a 'top' element $\alpha$ such that $\alpha_i$ is close to $\left( \prod_{j=1}^{i-1} p_j \prod_{j=i}^k q_j \right) \alpha$, for every $1 \le i \le k+1$, with an error that depends on the relative sizes of the primes involved.

After developing our general setting further in Section 3, we proceed in Section 4 to show how the additional 'physical' information that $|x/p - y/q|$ is small can be used to obtain uniform bounds for the approximations of the previous paragraph. In particular, the conclusions attained would work equally well for $H$ in the poly-logarithmic range and seem likely to be useful tools for future work on these problems.

The purpose of Section 5 is then to show that for many of the intervals we are studying the corresponding frequencies are connected by paths of the above form. Some connectedness of this type seems necessary in order to be able to find a fixed choice of $q$ and $T$ that works for many of these intervals and here is where the required lower bound on $H$ in Theorem \ref{1} emerges. The reason for it is that when studying paths of length $k$, one is naturally led to some losses of the order of $C^k$ in the bounds, for some absolute constant $C>1$. Such losses become problematic once $C^{k}$ is comparable to $H$ and since one needs to consider paths of length around $\frac{\log X}{\log H}$ in order to be able to have enough of the intervals connected with each other, we end up in that situation once $H$ goes below $\exp( C (\log X)^{1/2})$ (essentially the same observation can already be found in \cite{MRT, MRTTZ}). In fact, due to the density of prime numbers one is also led to some additional factors of the order of $(\log H)^k$, which is the reason for the exact lower bound on $H$ in Theorem \ref{1}. 

Finally, we complete the proof in Section 6. Once enough intervals have been connected to a fixed interval $I_0$, one can relatively easy use the properties of the 'pyramids' obtained in the first sections to show that a fixed choice of $q$ and $T$ works for many of the intervals.

We notice that the methods of this article end up using auxiliary phases as in \cite{W}, but also the advantage of working with higher moduli as in \cite{MRT, MRTTZ}. As such, they can be seen as a middle point between both arguments. On the other hand, an outcome of this article is that neither the contagion arguments of \cite{W} nor the mixing lemmas of \cite{MRT, MRTTZ} end up being necessary to cover the natural range of $\exp( (\log X)^{1/2+\varepsilon})$. However, such tools may very well end up being useful when trying to lower the value of $H$ further.

\begin{notation} 
We will write $X \lesssim Y$ or $X=O(Y)$ to mean that there is some absolute constant $C$ with $|X| \le C Y$ and $X \sim Y$ if both $X \lesssim Y$ and $Y \lesssim X$ hold. If the implicit constants depend on some additional parameters, we shall use a subscript to indicate this. For a finite set $S$ we write $|S|$ for its cardinality. We abbreviate $e(x):=e^{2 \pi i x}$ and given $Q \in \N$ we write $\| \cdot \|_Q$ for the distance to $0$ in $\R / Q \Z$.
\end{notation}

\section{Pyramids}
\label{phase}

We begin with the following extension of \cite[Lemma 2.1]{W}.

\begin{lema}
\label{astart}
Let $\epsilon_1,\epsilon_2 > 0$ and let $Q \in \N$. Let $\alpha_1,\alpha_2 \in \R / Q \Z$ and let $p_1,p_2$ be distinct primes not dividing $Q$ with $\| p_1 \alpha_1 - p_2 \alpha_2\|_Q  < \epsilon_1 + \epsilon_2$. Then, there exists $\alpha \in \R / Q \Z$ with $\| p_i \alpha - \alpha_j \|_Q < \frac{\epsilon_j}{p_j}$ if $i \neq j$.
\end{lema}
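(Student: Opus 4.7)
The plan is to reduce to an exact linear relation between suitable perturbations of $\alpha_1$ and $\alpha_2$, and then invert the multiplication-by-$p_i$ maps on $\R / Q \Z$ using the Bézout identity for the coprime primes $p_1, p_2$. Concretely, I would first fix arbitrary lifts $\alpha_1', \alpha_2' \in \R$ of $\alpha_1, \alpha_2$, so that the hypothesis translates into
$$ p_1 \alpha_1' - p_2 \alpha_2' = \gamma + kQ $$
for some $k \in \Z$ and some $\gamma \in \R$ with $|\gamma| < \epsilon_1 + \epsilon_2$. The strict inequality lets me split $\gamma = \gamma_1 + \gamma_2$ with $|\gamma_j| < \epsilon_j$ (for instance, by distributing proportionally as $\gamma_j = \gamma \cdot \epsilon_j / (\epsilon_1 + \epsilon_2)$). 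I then define auxiliary points $\tilde{\alpha}_1 := \alpha_1' - \gamma_1 / p_1$ and $\tilde{\alpha}_2 := \alpha_2' + \gamma_2 / p_2$, which by construction satisfy the exact identity $p_1 \tilde{\alpha}_1 - p_2 \tilde{\alpha}_2 = kQ$ in $\R$, i.e.\ $p_1 \tilde{\alpha}_1 \equiv p_2 \tilde{\alpha}_2 \pmod Q$.

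Next, I would use the fact that $\gcd(p_1, p_2) = 1$ (since $p_1, p_2$ are distinct primes) to choose integers $a, b$ with $a p_2 + b p_1 = 1$, and set $\alpha := a \tilde{\alpha}_1 + b \tilde{\alpha}_2$. A direct computation, substituting the relation $p_1 \tilde{\alpha}_1 = p_2 \tilde{\alpha}_2 + kQ$, gives
$$ p_2 \alpha = (a p_2 + b p_1) \tilde{\alpha}_1 - b k Q = \tilde{\alpha}_1 - b k Q, $$
$$ p_1 \alpha = (a p_2 + b p_1) \tilde{\alpha}_2 + a k Q = \tilde{\alpha}_2 + a k Q, $$
so that $p_2 \alpha \equiv \tilde{\alpha}_1$ and $p_1 \alpha \equiv \tilde{\alpha}_2$ modulo $Q$.

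With this $\alpha$ in hand, the required bounds follow directly from the construction of the $\tilde{\alpha}_j$: using $\| \cdot \|_Q \le | \cdot |$ on real numbers, one has
$$ \| p_2 \alpha - \alpha_1 \|_Q = \| \tilde{\alpha}_1 - \alpha_1' \|_Q \le |\gamma_1|/p_1 < \epsilon_1 / p_1, $$
and similarly $\| p_1 \alpha - \alpha_2 \|_Q < \epsilon_2 / p_2$, which is the desired conclusion. I don't expect any serious obstacle here: the hypothesis that $p_1, p_2$ do not divide $Q$ is not needed for the argument above, which depends only on $\gcd(p_1, p_2) = 1$; the one point that requires mild care is ensuring the decomposition $\gamma = \gamma_1 + \gamma_2$ genuinely respects both strict inequalities, which is automatic from the proportional split as long as $\gamma$ is a real lift of $p_1 \alpha_1 - p_2 \alpha_2$ of absolute value $< \epsilon_1 + \epsilon_2$.
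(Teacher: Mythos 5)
Your proof is correct, but it runs along a genuinely different track than the paper's. The paper works inside $\R / Q \Z$ throughout: it picks preimages $\alpha^{(1)}, \alpha^{(2)}$ with $p_1 \alpha^{(1)} = \alpha_2$ and $p_2 \alpha^{(2)} = \alpha_1$, uses the freedom of adding multiples of $Q/p_1$ and $Q/p_2$ to place them within $\frac{Q}{2 p_1 p_2}$ of each other (this is where coprimality of $p_1, p_2$ enters, implicitly, through the $\frac{Q}{p_1 p_2}$-spacing of the set of possible differences), deduces from the hypothesis that in fact $\| \alpha^{(1)} - \alpha^{(2)} \|_Q < \frac{\epsilon_1 + \epsilon_2}{p_1 p_2}$, and then takes the weighted interpolation $\alpha = \alpha^{(1)} - \frac{\epsilon_2}{\epsilon_1 + \epsilon_2} (\alpha^{(1)} - \alpha^{(2)})$. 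You instead perturb the original points by the proportional split of the error, $\tilde{\alpha}_1 = \alpha_1' - \gamma_1/p_1$ and $\tilde{\alpha}_2 = \alpha_2' + \gamma_2/p_2$, to force the exact congruence $p_1 \tilde{\alpha}_1 \equiv p_2 \tilde{\alpha}_2 \pmod Q$, and then solve the two congruences simultaneously with an explicit B\'ezout combination $\alpha = a \tilde{\alpha}_1 + b \tilde{\alpha}_2$; the verification you give is complete and the bounds match the statement. Your route makes coprimality explicit rather than hidden in a pigeonhole choice of preimage, works verbatim for any pair of coprime integers (not just distinct primes), and makes transparent the (correct) observation that $p_i \nmid Q$ is never used; the paper's route has the advantage of exhibiting $\alpha$ directly as a point simultaneously near a $p_1$-preimage of $\alpha_2$ and a $p_2$-preimage of $\alpha_1$, which is the geometric picture that gets iterated in the pyramid construction of Lemma \ref{tcp} and Lemma \ref{premanli}. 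Both arguments hinge on the same proportional distribution of the total error $\epsilon_1 + \epsilon_2$, so the quantitative content is identical.
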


\begin{proof}
For $\left\{ i, j \right\}= \left\{ 1,2 \right\}$, let $\alpha^{(i)} \in \R / Q \Z$ be such that $p_i \alpha^{(i)} = \alpha_j$. Adding integer multiples of $Q/p_i$ to $\alpha^{(i)}$ we may assume that $\| \alpha^{(1)} - \alpha^{(2)} \|_Q \le \frac{Q}{2 p_1 p_2}$. On the other hand, we have by hypothesis that $\| p_1 p_2 (\alpha^{(1)} - \alpha^{(2)} ) \|_Q < \epsilon_1 + \epsilon_2$. Combining both estimates we see that it must in fact be $\| \alpha^{(1)} - \alpha^{(2)} \|_Q < \frac{\epsilon_1 + \epsilon_2}{p_1 p_2}$. Taking $\alpha= \alpha^{(1)} - \frac{\epsilon_2}{\epsilon_1+\epsilon_2} (\alpha^{(1)}-\alpha^{(2)})$ we obtain the result.
\end{proof}

Lemma \ref{astart} immediately implies the following corollary.

\begin{lema}
\label{tcp}
Let $\epsilon_j,\epsilon_j'>0$ for every $1 \le j \le k$ and let $Q \in \N$. Let $(\alpha_1^{(1)},\ldots,\alpha^{(1)}_{k+1})$ and $(\alpha_2^{(0)},\ldots,\alpha_{k+1}^{(0)})$ be tuples of elements of $\R / Q \Z$ and let $p_1,\ldots,p_{k}$, $q_1,\ldots,q_{k}$ be a sequence of distinct primes not dividing $Q$ such that, for every $1 \le j \le k$, we have $\| q_{j} \alpha^{(1)}_{j+1} - \alpha^{(0)}_{j+1} \|_Q < \epsilon_j$ and $\| p_j \alpha^{(1)}_j - \alpha^{(0)}_{j+1} \|_Q < \epsilon'_j$. Then, there exists a tuple $(\alpha^{(2)}_1,\ldots,\alpha^{(2)}_{k})$ of elements of $\R /Q  \Z$ such that, for every $1 \le j \le k$, we have $\| p_j \alpha^{(2)}_j  - \alpha^{(1)}_{j+1} \|_Q < \frac{\epsilon_j}{q_j}$ and $\| q_j \alpha^{(2)}_j - \alpha^{(1)}_j \|_Q < \frac{\epsilon_j'}{p_j}$.
\end{lema}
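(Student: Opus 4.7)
The plan is to apply Lemma \ref{astart} separately for each index $1 \le j \le k$; since the hypotheses for different $j$ are independent and involve disjoint "building blocks", there is no interaction between the indices and the desired tuple $(\alpha^{(2)}_1,\ldots,\alpha^{(2)}_k)$ can be assembled component by component.

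Fix $1 \le j \le k$. The two hypotheses $\| p_j \alpha^{(1)}_j - \alpha^{(0)}_{j+1} \|_Q < \epsilon'_j$ and $\| q_j \alpha^{(1)}_{j+1} - \alpha^{(0)}_{j+1} \|_Q < \epsilon_j$ combine by the triangle inequality in $\R / Q \Z$ to give
\[
\| p_j \alpha^{(1)}_j - q_j \alpha^{(1)}_{j+1} \|_Q < \epsilon'_j + \epsilon_j.
\]
This is exactly the hypothesis of Lemma \ref{astart} applied to the distinct primes $p_j, q_j$ (neither of which divides $Q$) and to the pair $\alpha^{(1)}_j, \alpha^{(1)}_{j+1}$, with the choice $\epsilon_1 = \epsilon'_j$ and $\epsilon_2 = \epsilon_j$. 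Lemma \ref{astart} then produces an element $\alpha^{(2)}_j \in \R / Q \Z$ satisfying $\| p_j \alpha^{(2)}_j - \alpha^{(1)}_{j+1} \|_Q < \epsilon_j / q_j$ and $\| q_j \alpha^{(2)}_j - \alpha^{(1)}_j \|_Q < \epsilon'_j / p_j$, which are the two desired estimates.

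Collecting the elements $\alpha^{(2)}_j$ obtained for $j = 1, \ldots, k$ yields the required tuple. The only thing one has to be a bit careful with is the bookkeeping of which $\epsilon$ pairs with which prime in the conclusion of Lemma \ref{astart}, since the roles of $\epsilon_1,\epsilon_2$ and $p_1,p_2$ get swapped in the two output bounds; matching them correctly gives exactly the asymmetric assignment stated in the conclusion. There is no genuine obstacle here, as the lemma is essentially a parallel application of the two-variable statement, which is the reason it is introduced as an immediate corollary.
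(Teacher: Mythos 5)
Your proof is correct and is exactly the argument the paper intends: Lemma \ref{tcp} is stated there as an immediate corollary of Lemma \ref{astart}, and your component-wise application (triangle inequality through $\alpha^{(0)}_{j+1}$, then Lemma \ref{astart} with $\epsilon_1=\epsilon'_j$, $\epsilon_2=\epsilon_j$ and primes $p_j,q_j$) reproduces it, with the asymmetric pairing of errors and primes in the conclusion tracked correctly.
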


This can be iterated in the following way: after we have used sets of frequencies $(\alpha_1^{(j)},\ldots,\alpha^{(j)}_{k+2-j})$, $(\alpha_2^{(j-1)},\ldots,\alpha_{k+2-j}^{(j-1)})$ and primes $p_1,\ldots,p_{k+1-j}$, $q_j,\ldots,q_{k}$ not dividing $Q$ to obtain a new set $(\alpha^{(j+1)}_1,\ldots,\alpha^{(j+1)}_{k+1-j})$, we can then use the sets of frequencies $(\alpha^{(j+1)}_1,\ldots,\alpha^{(j+1)}_{k+1-j})$, $(\alpha_2^{(j)},\ldots,\alpha^{(j)}_{k+1-j})$ and primes $p_1,\ldots,p_{k-j}$, $q_{j+1},\ldots,q_{k}$ to obtain a new set $(\alpha^{(j+2)}_1,\ldots,\alpha^{(j+2)}_{k-j})$. We thus arrive, for every $1 \le j \le k+1$, at a tuple $(\alpha_1^{(j)},\ldots,\alpha_{k+2-j}^{(j)})$ of elements of $\R / Q \Z$. It will be convenient to name these objects.

\begin{defi}
A \emph{pre-path} mod $Q$ is a choice of (ordered) tuples $(\alpha_1^{(1)},\ldots,\alpha^{(1)}_{k+1})$, $(\alpha_2^{(0)},\ldots,\alpha_{{k+1}}^{(0)})$ of elements of $\R / Q \Z$, real numbers $\epsilon_1, \ldots, \epsilon_{k}, \epsilon_1', \ldots, \epsilon'_{k}>0$ and distinct primes $p_1,\ldots,p_{k}$, $q_1,\ldots,q_{k}$ not dividing $Q$ satisfying the hypothesis of Lemma \ref{tcp}. We say a corresponding sequence $(\alpha_1^{(j)})_{1 \le j \le k+1}$ obtained as in the previous paragraph is a \emph{pyramid} associated with this pre-path. We call $\alpha_1^{(k+1)}$ the \emph{top element} of the pyramid and $k$ the \emph{length} of the pre-path.
\end{defi}

The following will be our main input for the study of pre-paths.

\begin{lema}
\label{premanli}
Let $0 < \epsilon < 1$. Given a pre-path of length $k$ with $\epsilon_i=\epsilon_i'=\epsilon$ for every $1 \le i \le k$, we have that any associated pyramid $(\alpha_1^{(j)})_{1 \le j \le k+1}$ satisfies
$$ \| q_j \alpha_1^{(j+1)} - \alpha_1^{(j)} \|_Q <  \epsilon \left( \prod_{i=1}^{\lfloor (j-1)/2 \rfloor+1} p_i \right)^{-1} \left( \prod_{i=1}^{\lceil (j-1)/2 \rceil } q_{j-i} \right)^{-1},$$
for every $1 \le j \le k$.
\end{lema}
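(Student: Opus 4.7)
My plan is to iterate the output bounds of Lemma \ref{tcp} along a chain of inequalities that starts from $\alpha_1^{(j+1)}$ at the top of the pyramid and descends to the base pair $T^{(0)}, T^{(1)}$, where the hypothesis $\epsilon_i = \epsilon_i' = \epsilon$ provides the initial bound. To keep things clean I would introduce the abbreviations $F^{(l,i)} := \|p_i \alpha_i^{(l)} - \alpha_{i+1}^{(l-1)}\|_Q$ and $G^{(l,i)} := \|q_{l+i-2}\,\alpha_i^{(l)} - \alpha_i^{(l-1)}\|_Q$. A short check against the iterative construction outlined in the paragraph preceding the definition of pre-path shows that the step producing $T^{(l)}$ applies Lemma \ref{tcp} with the $j$-th pair of primes being $(p_j, q_{l+j-2})$. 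The output of Lemma \ref{tcp} then reads, for every $l \ge 2$,
$$ G^{(l,i)} < F^{(l-1,i)}/p_i \qquad \text{and} \qquad F^{(l,i)} < G^{(l-1,i+1)}/q_{l+i-2}, $$
while at the base level $l = 1$ the definitions of $F^{(1,i)}$ and $G^{(1,i)}$ reduce exactly to the pre-path hypotheses, so both are bounded by $\epsilon$.

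The key observation is then that the quantity we wish to estimate is precisely $G^{(j+1,1)}$, since $q_{(j+1)+1-2} = q_j$. Iterating the two recursions above from $G^{(j+1,1)}$ produces an alternating chain
$$ G^{(j+1,1)} < \tfrac{F^{(j,1)}}{p_1} < \tfrac{G^{(j-1,2)}}{p_1 q_{j-1}} < \tfrac{F^{(j-2,2)}}{p_1 q_{j-1} p_2} < \cdots, $$
in which each step lowers the level $l$ by one and introduces a single prime factor into the denominator. The sequence of primes collected along the way is $p_1, q_{j-1}, p_2, q_{j-2}, p_3, q_{j-3}, \ldots$: the $p$-index goes up by one on every $G \to F$ step, and the $q$-index goes down by one on every $F \to G$ step. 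After $j$ iterations the level reaches $l = 1$ and we land on either $F^{(1,\cdot)}$ or $G^{(1,\cdot)}$, depending on the parity of $j$, at which point the bound is at most $\epsilon$.

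A final tally then completes the proof: among the $j$ steps, $\lceil j/2 \rceil = \lfloor (j-1)/2 \rfloor + 1$ are of $p$-type and $\lfloor j/2 \rfloor = \lceil (j-1)/2 \rceil$ are of $q$-type, producing in the denominator exactly the product $\bigl( \prod_{i=1}^{\lfloor (j-1)/2 \rfloor + 1} p_i \bigr) \bigl( \prod_{i=1}^{\lceil (j-1)/2 \rceil} q_{j-i} \bigr)$ appearing in the claim. The only real obstacle is the bookkeeping: one must verify that the ``$q$''-output $G^{(l,i)}$ of the step producing $T^{(l)}$ is indeed what feeds as input at position $i-1$ of the step producing $T^{(l+1)}$, and similarly for the ``$p$''-outputs, so that the recursive chain is well-defined. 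This is a direct check from the explicit iterative construction given between Lemma \ref{tcp} and the definition of pre-path. Once this matching is in place, the result may equivalently be phrased as a short induction on $j$ using $F^{(l,i)}$ and $G^{(l,i)}$ as induction data.
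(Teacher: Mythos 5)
Your proof is correct, and the underlying mechanism is the same as the paper's: both arguments iterate the two output bounds of Lemma \ref{tcp} down the interior of the pyramid, collecting the alternating primes $p_1, q_{j-1}, p_2, q_{j-2}, \ldots$. The organization, however, is genuinely different. The paper runs an induction on the length of the pre-path and handles the top case $j=k$ via an inversion symmetry: the transposed array $(\alpha_{k+1-j}^{(j)})_{1\le j\le k}$ is a pyramid for the \emph{inverted} pre-path (with the roles of the $p_i$ and $q_i$ swapped), so after the single application of Lemma \ref{tcp} giving $\| q_k \alpha_1^{(k+1)} - \alpha_1^{(k)} \|_Q \le \| p_1 \alpha_1^{(k)} - \alpha_2^{(k-1)} \|_Q / p_1$, the induction hypothesis applied to the reversed path finishes the estimate. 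You avoid the inversion trick entirely by tracking the two families $F^{(l,i)}$ and $G^{(l,i)}$ explicitly; your recursions $G^{(l,i)} < F^{(l-1,i)}/p_i$ and $F^{(l,i)} < G^{(l-1,i+1)}/q_{l+i-2}$ are exactly what the iterative construction preceding the definition of pre-path provides (the step producing level $l$ indeed uses the prime pair $(p_i, q_{l+i-2})$ at position $i$, and the $q$-output at position $i$ of level $l$ feeds position $i-1$ of level $l+1$, the $p$-output position $i$), and your parity count of $\lceil j/2 \rceil$ $p$-steps and $\lfloor j/2 \rfloor$ $q$-steps matches the exponent ranges $\lfloor (j-1)/2 \rfloor + 1$ and $\lceil (j-1)/2 \rceil$ in the statement. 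What your explicit descent buys is transparency: the prime indices appearing in the denominator are visible at each step. What the paper's induction-plus-inversion buys is economy: one never needs to name or verify the interior quantities, since the reversed path lets the statement bound itself at shorter length. Either write-up is a complete proof; if you formalize yours, the only point requiring care is the index matching you already flagged, which you have stated correctly.
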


\begin{proof}
We will proceed by induction on the length of the pre-path. If $k=1$ the claim is immediate from Lemma \ref{tcp}, so we let $k \ge 2$ and assume the result has already been established for all pre-paths of length at most $k-1$. If $j \le k-1$ observe that the parameters $(\alpha_1^{(1)},\ldots,\alpha_{j+1}^{(1)})$, $(\alpha_2^{(0)},\ldots,\alpha_{j+1}^{(0)})$, $p_1,\ldots,p_{j}$, $q_1,\ldots,q_{j}$ and $\epsilon_1=\epsilon'_1=\ldots=\epsilon_j=\epsilon_j'=\epsilon$ form a pre-path of length $j$ and $(\alpha_1^{(i)})_{1 \le i \le j+1}$ is a pyramid for this pre-path, so the result follows by induction in this case. We thus only need to treat $j=k$. By Lemma \ref{tcp} we know that 
\begin{equation}
\label{ppp14}
 \| q_{k} \alpha_1^{(k+1)} - \alpha_1^{(k)} \|_Q \le \frac{\| p_1 \alpha_1^{(k)} - \alpha_2^{(k-1)} \|_Q}{p_1}.
 \end{equation}
Here we are using that, after iterating Lemma \ref{tcp}, $q_{k}$ and $p_1$ are the primes that end up relating $\alpha_1^{(k+1)}$ with $\alpha_1^{(k)}$ and $\alpha_2^{(k)}$, respectively. Notice now that the 'inverted' parameters $(\alpha_{k}^{(1)},\ldots,\alpha_1^{(1)})$, $(\alpha_{k}^{(0)},\ldots,\alpha_2^{(0)})$, $q_{k-1},\ldots,q_1$, $p_{k-1},\ldots,p_1$ and $\epsilon$ form a pre-path of length $k-1$ and that $(\alpha_{k+1-j}^{(j)})_{1 \le j \le k}$ is a pyramid for this pre-path (notice that the roles of the primes $p_i$ and $q_i$ have also been inverted). It thus follows by induction that the right-hand side of (\ref{ppp14}) is
$$ <\frac{\epsilon}{p_1} \left( \prod_{i=1}^{\lfloor k/2 \rfloor} q_{k-i} \right)^{-1} \left( \prod_{i=1}^{\lceil k/2-1 \rceil } p_{(k)-(k-1-i)} \right)^{-1}.$$
The result then follows upon rearranging. 
\end{proof}

\section{General setting}

In this section we will invoke some results from \cite{MRT} and \cite{MRTTZ} that will serve as the setting for our approach and will also establish a regularity estimate that will be useful in the rest of the article.

As mentioned in the introduction, once we have shown that $g$ correlates with $n \mapsto e(an/q) n^{2 \pi i T}$ on many of the intervals and for a certain fixed choice of $q$ and $T$, Theorem \ref{1} will then follow from the results of Matömaki and Radziwi\l\l \, \cite{MR,MR2, MRT2}. More precisely, we will be using the following reduction which follows from the last part of \cite[Section 6]{MRTTZ} and relies on the power saving bounds of \cite{MR2}.

\begin{prop}
\label{2}
Let $g$ be a complex-valued multiplicative function with $|g| \le 1$. Let $\rho>0$ be sufficiently small, $C >0$ sufficiently large with respect to $\rho$ and $X \ge 1$ sufficiently large with respect to $\rho$ and $C$. Let $H$ be as in Theorem \ref{1}. Also, let $T \in \R$ and $q \in \N$ satisfy $|T| \le C X^2/H^{2-\rho}$ and $q \le C H^{\rho}$. Assume that for $\ge X/H^{1+\rho}$ disjoint intervals $I \subseteq [X,2X]$ of length $H^* \in [H^{1-\rho}, H]$ we can find some integer $a_I$ with $|\sum_{n \in I} g(n) n^{2 \pi iT} e(a_I n/q)| \ge cH^*$, for some $c >0$. Then $\mathbb{D} (g ; BX^2/H^{2-\rho},B) = O_{\rho,C,c}(1)$ for some $B =O_{\rho,C,c}(1)$.
\end{prop}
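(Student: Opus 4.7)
The plan is to convert the hypothesis, which concerns $q$-dependent phases $e(a_I n/q)$ varying with the interval, into a purely multiplicative local $L^1$ statement, and then to invoke the power-saving Matomaki--Radziwill theorem. Since there are only $q \le CH^{\rho}$ residues modulo $q$, a pigeonhole step over the $a_I$ furnishes $\gtrsim X/H^{1+2\rho}$ disjoint intervals $I$ on which a common residue $a \bmod q$ appears. Restricting the inner sums to $n$ coprime to $q$ loses only a negligible fraction of the mass (since $q$ has few prime divisors compared with $H$), after which one can expand $e(an/q)$ via the standard Gauss-sum decomposition into a linear combination of $\chi(n)$ with $\chi$ running over Dirichlet characters of conductor dividing $q$. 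A further pigeonhole over the $O(q)$ characters then produces a fixed $\chi$ such that
$$ \left| \sum_{n \in I,\, (n,q)=1} g(n) \chi(n) n^{2\pi i T} \right| \gtrsim c' H^* $$
on $\gtrsim X/H^{1+3\rho}$ of the intervals $I$, with $c'$ depending on $\rho, C, c$.

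At this point $f(n) := g(n) \chi(n) n^{2\pi i T} \mathbf{1}_{(n,q)=1}$ is, up to the harmless coprimality restriction, a $1$-bounded multiplicative function exhibiting local $L^1$ mass $\gtrsim H^*$ on $\gtrsim X/H^{1+O(\rho)}$ disjoint intervals of length $H^* \in [H^{1-\rho},H]$ inside $[X,2X]$. The power-saving form of the Matomaki--Radziwill theorem in \cite{MR2} (as packaged at the end of \cite[Section 6]{MRTTZ}) asserts that such an $f$ must pretend to $1$ quantitatively: there exist $|t_0| = O(X^2/H^{2-\rho})$ and a Dirichlet character $\chi_0$ of modulus $O(1)$ (with implicit constants depending on $\rho,C,c$) such that the pretentious distance from $f$ to $\chi_0 \cdot n^{-it_0}$ on primes up to $BX^2/H^{2-\rho}$ is $O(1)$. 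Absorbing the composite character $\chi \chi_0$ (of modulus $\le Bq \le BCH^{\rho}$) and the archimedean shift $2\pi T + t_0$ (of size $\le BX^2/H^{2-\rho}$) into the infimum defining $\mathbb{D}$ then yields the desired bound $\mathbb{D}(g; BX^2/H^{2-\rho}, B) = O(1)$ upon enlarging $B$ appropriately.

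The main difficulty lies in the exact calibration of the archimedean parameter $X^2/H^{2-\rho}$: individually, intervals of length $H^*$ resolve a twist $n^{it}$ only up to $|t| \lesssim X/H^*$, and only when combined with the $\gtrsim X/H^{1+O(\rho)}$ intervals carrying mass does the usable range of $t$ reach the claimed scale. Verifying this calibration, together with checking that the losses from restricting to $(n,q)=1$ and from the two pigeonholing steps (over residues mod $q$ and over characters mod $q$) are all absorbed into the flexibility afforded by the $H^{O(\rho)}$ factors in the hypothesis and the $O_{\rho,C,c}(1)$ constants in the conclusion, is the technical heart of the argument; we import it as a black box from \cite[Section 6]{MRTTZ} and \cite{MR2}.
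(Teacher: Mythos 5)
The paper offers no proof of Proposition \ref{2}: it is imported wholesale from the end of \cite[Section 6]{MRTTZ} together with the power-saving bounds of \cite{MR2}, so your instinct to defer the technical heart to those references is in itself consistent with what the paper does. The difficulty is that the reduction you interpose before the black box is not sound, and it replaces the hypothesis by a strictly weaker statement from which the stated conclusion cannot follow. First, the Gauss-sum expansion of $e(an/q)$ over $(n,q)=1$ has coefficients $\tau(\bar\chi)/\varphi(q)$ of size about $q^{-1/2}$, so pigeonholing over the characters mod $q$ only yields $|\sum_{n\in I}g(n)\chi(n)n^{2\pi iT}|\gtrsim cH^*/\sqrt{q}\sim cH^*H^{-\rho/2}$ on the surviving intervals, not $\gtrsim c'H^*$ with $c'=c'(\rho,C,c)$: the loss is a power of $H$, not a constant, and it propagates into every subsequent application of the Matom\"aki--Radziwi\l\l{} input.

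Second, and more fundamentally, the closing step --- ``absorbing the composite character $\chi\chi_0$ of modulus $\le BCH^{\rho}$ into the infimum upon enlarging $B$'' --- is impossible: the conclusion requires a character of modulus at most $B=O_{\rho,C,c}(1)$, bounded independently of $H$ and $X$, whereas your argument at best delivers $\mathbb{D}(g;BX^2/H^{2-\rho},BCH^{\rho})=O(1)$, a strictly weaker assertion. This cannot be repaired from your intermediate statement: taking $g=\bar\chi$ with $\chi$ a primitive character of modulus $q\sim H^{\rho}$, one has $|\sum_{n\in I,\,(n,q)=1}g(n)\chi(n)|\gg H^*$ on every interval, so $g$ satisfies your reduced hypothesis, yet it is not within bounded distance of any $\chi_0(n)n^{it}$ with $\chi_0$ of bounded modulus; note that this $g$ does \emph{not} satisfy the original hypothesis, since $|\sum_{n\in I}\bar\chi(n)e(an/q)|\approx H^*/\sqrt{q}$. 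In other words, the full strength $\ge cH^*$ of the correlation with the additive character --- as opposed to the level $H^*/\sqrt{q}$ that a large-conductor multiplicative character can produce --- is exactly the information that forces the bounded modulus in the conclusion, and your Gauss-sum/pigeonhole conversion discards it irretrievably. Any correct argument must keep the additive phase (equivalently, the full $e(ab/q)$-pattern across residue classes $b$ mod $q$) inside the Matom\"aki--Radziwi\l\l{} machinery, which is how the deduction is actually carried out at the end of \cite[Section 6]{MRTTZ}; if you wish to recover the bounded modulus after the fact, you would have to return to the original hypothesis and argue that a pretend-character of large conductor caps the additive correlation at $H^*/\sqrt{\mathrm{cond}}$, a step absent from your sketch.
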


Our task is then reduced to establishing the following estimate.

\begin{teo}
\label{3}
Let $g,\rho,\eta$ and $H$ be as in Theorem \ref{1}. Then, if $C$ is sufficiently large with respect to $\eta$ and $\rho$ and $X$ is sufficiently large with respect to $C$, we can find $T \in \R$ and $q \in \N$ with $|T| \le C X^2/H^{2-\rho}$ and $q \le CH^{\rho}$ such that, for  $\ge \frac{X}{H^{1+\rho}}$ disjoint intervals $I_x=[x,x+H^*] \subseteq [X/H^{\rho},2X]$ of length $H^* \in [H^{1-\rho}, H]$ there exists an integer $a_x$ with $|\sum_{n \in I_x} g(n) e(\frac{(n-x) a_x }{q} + \frac{(n-x)T}{x})| \ge  H^*/C$.
\end{teo}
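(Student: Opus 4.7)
The plan is to follow the five-step strategy outlined in the introduction. First, combining the hypothesis (\ref{A}) with Elliott's inequality and the large sieve in the manner of \cite{MRT,MRTTZ} (the content of Section 3) produces a family $\mathcal{I}$ of $\gtrsim X/H$ disjoint intervals $I_x = [x,x+H^*] \subseteq [X/H^\rho, 2X]$ with $H^* \in [H^{1-\rho},H]$, a frequency $\alpha_{I_x} \in \R/Q\Z$ for which $|\sum_{n \in I_x} g(n) e(\alpha_{I_x} n)| \gtrsim H^*$, and a modulus $Q \lesssim H^\rho$, such that a positive density of pairs $(I_x, I_y) \in \mathcal{I}^2$ admit distinct primes $p,q \asymp H$ not dividing $Q$ with $|x/p - y/q| \lesssim H/X$ and $\|p\alpha_{I_x} - q\alpha_{I_y}\|_Q < \epsilon$.

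Next, fixing a reference interval $I_0 \in \mathcal{I}$, I would argue by a connectedness argument in the graph on $\mathcal{I}$ whose edges are the above prime pairs that for $\gtrsim X/H^{1+\rho}$ of the $I \in \mathcal{I}$ one can produce a pre-path mod $Q$ in the sense of Section 2, of length $k \asymp \log X / \log H$, joining $\alpha_I$ to $\alpha_{I_0}$ through distinct primes $p_1,\ldots,p_k,q_1,\ldots,q_k$ of size $\asymp H$ and with uniform error $\epsilon_j = \epsilon'_j = \epsilon$. This is the content of Section 5 and is where the lower bound on $H$ enters: each step of the iteration loses a factor of roughly $C \log H$ coming from prime density and the number of admissible edges, and one needs $(C \log H)^k \ll H$ for $k$ of the required size.

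For each such $I$, Lemma \ref{premanli} then builds a pyramid whose top element relates $\alpha_I$ and $\alpha_{I_0}$ uniformly, with errors decaying geometrically in products of the $p_j, q_j$ rather than compounding multiplicatively. This yields an approximate arithmetic relation of the shape $(\prod_j q_j)\alpha_I \equiv (\prod_j p_j) \alpha_{I_0} \pmod{Q}$. Feeding into this the physical information $|x_j/p_j - x_{j+1}/q_j| \lesssim H/X$ accumulated along the path (the content of Section 4) then collapses the relation into the rigid identity
$$ \alpha_I \equiv \frac{m_I}{q} + \frac{T}{x} \pmod{1}, $$
up to an error of size $o(1/H^*)$, for a single $q \in \N$ with $q \le CH^\rho$, a single $T \in \R$ with $|T| \le CX^2/H^{2-\rho}$ (both determined by $I_0$), and integers $m_I$. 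Substituting this into $|\sum_{n \in I_x} g(n) e(\alpha_{I_x} n)| \gtrsim H^*$, absorbing the phase $e(\alpha_I x)$ into a constant and using $e(\alpha_I(n-x)) = e((m_I/q + T/x)(n-x)) + O(1/H^*\cdot|n-x|)$ on $I_x$, then delivers the desired inequality $|\sum_{n \in I_x} g(n) e(\tfrac{(n-x)a_x}{q} + \tfrac{(n-x)T}{x})| \ge H^*/C$ with $a_x \equiv m_I \pmod q$.

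The main obstacle I expect is the joint calibration of Sections 4 and 5: the connectedness argument must produce a positive proportion of intervals joined to $I_0$ by paths of length $\asymp \log X/\log H$, and the cumulative physical error propagated along such a path must still be small enough to force the restrictive form displayed above. The $C^k$ and $(\log H)^k$ losses intrinsic to iterating the pair-correlation step are precisely what dictate the lower bound $\exp(C(\log X)^{1/2}(\log\log X)^{1/2})$ on $H$, and it is Lemma \ref{premanli} that prevents similar losses from polluting the arithmetic side of the pyramid, so that both constraints can coexist at this sharp range.
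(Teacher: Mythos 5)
Your outline follows the paper's general framework up to Section 4, but the crucial step is missing: you claim that a \emph{single} pre-path from $I_0$ to $I$, together with the physical information $|x_j/p_j-x_{j+1}/q_j|\lesssim H/X$, "collapses" the relation into the rigid identity $\alpha_I \equiv \frac{m_I}{q}+\frac{T}{x} \pmod 1$ with $q\le CH^{\rho}$. It does not. A single path only relates $\alpha_I$ and $\alpha_{I_0}$ through an unknown auxiliary top frequency $\alpha$, via $\alpha_I \approx (\prod_i p_i\prod_i q_i)\,\alpha$-type relations (Lemma \ref{morealf}); nothing in this forces $\alpha$, hence $\alpha_I$, to be close to a rational with small denominator plus an Archimedean term. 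The paper's mechanism is the closed loop: Lemma \ref{pairs} produces, for each $(y,\beta)$ in a large set, \emph{two} split paths mod $Q_y$ from $(x_0,\alpha_0)$ to $(y,\beta)$ sharing no primes; concatenating $\ell_1+(\ell_2)^{-1}$ gives a path from $(x_0,\alpha_0)$ to itself, and Lemma \ref{morealf} at $j=1$ and $j=2k+1$ yields $\|N\alpha_y\|_{Q_y}\lesssim kK/H$ for the nonzero integer $N=\prod_i p_iq_i'-\prod_i q_ip_i'$, which is small ($|N|\lesssim k(2P)^4\lesssim H^{\delta}$) precisely because of the physical estimate (\ref{old28}) from Corollary \ref{pd21}. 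That is what produces the rational part $\frac{u_y}{d_y}Q_y$ with $d_y\lesssim H^{\delta}$ and the quantity $T_y$; the counting work of Section 5 (Lemmas \ref{olden}, \ref{products28}, Corollary \ref{ls28}) exists exactly to guarantee such prime-disjoint pairs of paths, and your single-path connectedness argument bypasses it without a substitute.

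A second gap: even granting per-interval data, you assert $q$ and $T$ are "determined by $I_0$", but in the paper each $y$ comes with its own $T_y$, $d_y$ and its own huge modulus $Q_y$ (a product of $\gtrsim \tau^{k_0}P'/\log P'$ primes in $[P',2P']$ -- not a fixed $Q\lesssim H^{\rho}$ as in your step 1; the primes $p_i,q_i$ are also of size $P\le H^{\varepsilon}$, not $\asymp H$). One still needs the final extraction: pigeonhole over $y$ to find $(y_0,\beta_0)$ with $\gcd(Q_{y_0},Q_y)$ large for many $y$, use the \emph{fixed} relation (\ref{ff2}) at $\alpha_0$ to force $|T-T_y|\lesssim |x_0|/H^{1-\delta}$ and to match the rational parts, and then pigeonhole over $d_y\lesssim H^{\delta}$ to fix $q$. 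Without the loop argument and this common-$(T,q)$ step, the proof of Theorem \ref{3} does not go through.
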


Using Theorem \ref{3}, the Taylor expansion of $(n/x)^{2 \pi i T}$ and the pigeonhole principle, we can then locate an appropriate dyadic interval in $[X/H^{\rho},2X]$ where the hypothesis of Proposition \ref{2} are satisfied, after adjusting $X$, $C$, $H^*$ and $\rho$ if necessary. Thus, in order to prove Theorem \ref{1}, it will suffice to establish Theorem \ref{3}.

\begin{defi}
\label{conf}
Given $R > 1$, $c > 0$ and an interval $I \subseteq \R$, we say a finite set $\mathcal{J} \subseteq I \times \R$ is a $(c,R)$-configuration if $|\mathcal{J}| \ge c|I|/R$ and the first coordinates are $R$-separated points in $I$ (i.e. $|x-y| \ge R$ if $x \neq y$).
\end{defi}

If $g$ satisfies (\ref{A}), then one can use Elliott's inequality and the large sieve to obtain a configuration as in Definition \ref{conf} for which its elements are highly related to each other through a set of primes whose size is a small power of $H$. Concretely, we have the following estimate from \cite{MRT}.

\begin{lema}[\cite{MRT}, Proposition 3.2]
\label{base}
Let the notation and assumptions be as in Theorem \ref{3}. Let $c_0, \varepsilon >0$ be sufficiently small with respect to $\eta$ and $\rho$ and $X$ sufficiently large with respect to $c_0$ and $\varepsilon$. Then, there exists a $(c_0,H/K)$-configuration $\mathcal{J} \subseteq [X/(10K),2X/K] \times \R$, for some $K \in [H^{\varepsilon^2},H^{\varepsilon}]$, such that for every $(x,\alpha) \in \mathcal{J}$ we have
$$ \left| \sum_{x \le n \le x+H/K} g(n) e(\alpha n) \right| \gtrsim H/K,$$
and a pair $P,P' \gtrsim_{\rho,\eta} H^{\varepsilon^2}$ with $P P'=K$ such that, for $\gtrsim_{\rho,\eta} \frac{X}{H} \left( \frac{P}{\log P} \right)^2$ choices of $(x_1,\alpha_1), (x_2,\alpha_2) \in \mathcal{J}$ and $p,q$ primes in $[P,2P]$, we have $|x_1/p - x_2/q| \lesssim_{\rho,\eta} H/(PK)$ and $\| p \alpha_1 - q \alpha_2 \|_{p'} \lesssim_{\rho,\eta} PK/H$ for $\gtrsim_{\rho,\eta} \left( \frac{P'}{\log P'} \right)$ primes $p' \in [P',2P']$. Furthermore, there exist disjoint sets $\mathcal{P}_1, \mathcal{P}_2 \subseteq [P,2P]$ of size $\gtrsim_{\rho,\eta} \frac{P}{\log P}$ such that the same claim holds (up to the implicit constants) if we additionally require $(p,q) \in \mathcal{P}_1 \times \mathcal{P}_2$.
\end{lema}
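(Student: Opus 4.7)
The plan is to reconstruct the argument of \cite[Proposition 3.2]{MRT}, which combines a Matom\"aki--Radziwi\l\l\ short-interval reduction, Elliott's inequality, and the large sieve. Since the lemma is quoted essentially verbatim, the proof is a matter of parameter bookkeeping rather than a new inequality.

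\textbf{Building the configuration.} Starting from (\ref{A}), measurably select for each $x \in [X,2X]$ a near-maximizing frequency $\beta(x)$ so that $|\sum_{x \le n \le x+H} g(n) e(\beta(x) n)| \gtrsim \eta H$ for a set of $x$ of measure $\gtrsim_\eta X$. A Matom\"aki--Radziwi\l\l-type short-interval reduction applied to $n \mapsto g(n) e(\beta(x) n)$ shows that for typical $x$ the correlation survives on sub-intervals of length $H/K_0$ for some dyadically pigeonholed scale $K_0$. Using multiplicativity of $g$, one then restricts $n$ to multiples of a prime $k$ in a further pigeonholed range $[K,2K]$ with $K \in [H^{\varepsilon^2}, H^\varepsilon]$; the substitution $n = km$ rescales each short interval into one of length $\sim H/K$ sitting inside $[X/(10K), 2X/K]$ and replaces $\beta(x)$ by $\alpha = k\beta(x)$. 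Choosing the frequencies $\alpha$ to be $(H/K)$-separated in the first coordinate by a further greedy selection yields the $(c_0, H/K)$-configuration $\mathcal{J}$.

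\textbf{Elliott step for the physical relation.} To extract quadruples with $|x_1/p - x_2/q|$ small, apply Elliott's inequality to $g$ with primes $p,q \in [P,2P]$ (where $P$ is chosen a small power of $H$ to be determined). Expanding the correlation and using $g(pm) = g(p)g(m)$ whenever $p \nmid m$, one gets that the contribution of pairs with non-overlapping rescaled intervals $[x_1/p, x_1/p + H/(PK)]$ and $[x_2/q, x_2/q + H/(PK)]$ is negligible by the $L^2$ cancellation built into Elliott; hence a positive proportion of admissible quadruples must satisfy $|x_1/p - x_2/q| \lesssim H/(PK)$.

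\textbf{Large sieve for the frequency relation.} To promote the phase relation to one that holds modulo many small primes, apply the large sieve to the frequencies $p\alpha_1 - q\alpha_2$ with moduli $p' \in [P',2P']$ where $PP' = K$: the dual large-sieve bound forces, for a positive proportion of the surviving quadruples, the inequality $\|p\alpha_1 - q\alpha_2\|_{p'} \lesssim PK/H$ to hold for $\gtrsim P'/\log P'$ such primes $p'$. A final bipartite pigeonhole on the incidence between quadruples in $\mathcal{J}$ and prime pairs $(p,q)$ produces the disjoint subsets $\mathcal{P}_1, \mathcal{P}_2 \subseteq [P,2P]$ with the claimed density. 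The principal obstacle is the parameter balancing: one must track how the Matom\"aki--Radziwi\l\l\ loss, the $L^2$-loss from Elliott's inequality and the multiplicative factor $\log P'$ from the large sieve combine so that the conclusions $H/(PK)$ and $PK/H$ come out with the stated uniformity; once this bookkeeping is done, no further new input is needed.
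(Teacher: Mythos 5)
Your overall plan coincides with what the paper actually does: the paper gives no independent proof of this lemma, stating only that it ``proceeds exactly as in Proposition 3.2 of \cite{MRT}, except for the last claim which simply follows from the pigeonhole principle'' --- and your final bipartite pigeonhole for the disjoint sets $\mathcal{P}_1,\mathcal{P}_2$ is exactly that last step. So at the level of strategy (Elliott's inequality plus the large sieve, as in \cite{MRT}, plus a pigeonhole at the end) you are on the paper's route.

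However, as a standalone reconstruction your sketch has a genuine gap at the central step. First, you build $\mathcal{J}$ by restricting to multiples of a single pigeonholed prime $k\in[K,2K]$ and only afterwards try to extract the relations; this discards the two-scale structure $K=PP'$ on which the conclusion depends. In \cite{MRT} the configuration and the quadruple relations are produced \emph{together}, by two applications of Elliott's inequality (one through primes of size $P$, one through primes of size $P'$) combined with a large-sieve rigidity statement for the frequencies of the rescaled intervals; the physical coincidence $|x_1/p-x_2/q|\lesssim H/(PK)$ comes from a Cauchy--Schwarz/counting argument over these rescaled intervals, not from ``$L^2$ cancellation built into Elliott''. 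Second, and more seriously, your assertion that ``the dual large-sieve bound forces $\|p\alpha_1-q\alpha_2\|_{p'}\lesssim PK/H$ for $\gtrsim P'/\log P'$ primes $p'$'' is not a valid deduction: the large sieve with moduli $p'$ says nothing about the real number $p\alpha_1-q\alpha_2$ being within $PK/H$ of an integer multiple of $p'$, and a direct application of the large sieve at the scale $H/(PK)$ would only give closeness in $\R/\Z$. The $\R/p'\Z$ relations for many $p'\in[P',2P']$ --- which are precisely what the rest of the paper (pre-paths mod $Q$ with $Q$ a product of such $p'$) needs --- are inherited from how the frequencies are passed through the prime scale $P'$ in the construction, and obtaining them is the actual content of \cite[Proposition 3.2]{MRT}. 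As written, this step of your argument would fail, so the proof is incomplete unless one genuinely reproduces (or cites) the two-scale argument of \cite{MRT}. A minor additional point: no Matom\"aki--Radziwi\l\l\ short-interval theorem is needed to pass from length $H$ to length $H/K$ with the same frequency (a triangle-inequality pigeonhole suffices); that input belongs to Proposition \ref{2}, not here.
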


\begin{proof}
The proof proceeds exactly as in Proposition 3.2 of \cite{MRT}, except for the last claim which simply follows from the pigeonhole principle.
\end{proof}

\begin{notation}
For the rest of this article we will work with a specific of $\eta, \rho, X$ and $H$ as in Theorem \ref{3} and of $c_0, \varepsilon, P, P', K, \mathcal{J}, \mathcal{P}_1$ and $\mathcal{P}_2$ as provided in Lemma \ref{base}. All implicit constants will be allowed to depend on $\eta$ and $\rho$, but will be uniform in our choice of $X$ and $H$.
\end{notation}

We will require the following definition in order to study the properties of pre-paths arising from the configuration $\mathcal{J}$.

\begin{defi}
Let $Q \in \N$ and $\mathcal{J}' \subseteq \mathcal{J}$. We define a \emph{path mod} $Q$ \emph{of length} $k$ \emph{in} $\mathcal{J}'$ to be a pre-path mod $Q$ of length $k$ with parameters $\epsilon_1,\epsilon_1',\ldots,\epsilon_k,\epsilon_k' \lesssim PK/H$, $p_1,\ldots,p_k, q_1,\ldots,q_{k} \in [P,2P]$, $(\alpha_i^{(1)})_{i=1}^{k+1} \subseteq \R / Q \Z$ and a set of elements $(x_1,\alpha_1),\ldots, (x_{k+1},\alpha_{k+1}) \in \mathcal{J}'$ such that $\alpha_i \equiv \alpha_i^{(1)}$ (mod $Q$) for every $1 \le i \le k+1$ and $x_i \frac{q_i}{p_i} = x_{i+1}+O(H/K)$ for every $1 \le i \le k$. We then say $(x_1,\alpha_1)$ and $(x_{k+1},\alpha_{k+1})$ are \emph{connected} by a path mod $Q$ of length $k$ in $\mathcal{J}'$. We call $(x_1,\alpha_1)$ the \emph{initial point} and $(x_{k+1},\alpha_{k+1})$ the \emph{end point} of the path. We additionally say the path is \emph{split} if $p_1,\ldots,p_k \in \mathcal{P}_1$ and $q_1,\ldots,q_{k} \in \mathcal{P}_2$.
\end{defi}

\begin{notation}
Given a path $\ell$ mod $Q$ of length $k$ consisting of elements $(x_1,\alpha_1), \ldots, (x_{k+1}, \alpha_{k+1})$ and primes $(p_1,\ldots,p_{k},q_1,\ldots,q_{k})$ we will sometimes need to consider the 'inverted' path $\ell^{-1}$ having initial point $(x_{k+1}, \alpha_{k+1})$ and end point $(x_1,\alpha_1)$, with the corresponding ordered set of primes given by $(q_{k},\ldots,q_1,p_{k},\ldots,p_1)$. Notice that if $\alpha$ is the top element of a pyramid associated to $\ell$, then it is also the top element of a pyramid associated with $\ell^{-1}$. Also, suppose we are given an additional path $\ell'$ mod $Q$ of length $m$ consisting of elements $(y_1,\beta_1),\ldots,(y_{m+1},\beta_{m+1})$. If $(y_1,\beta_1)=(x_{k+1},\alpha_{k+1})$, we may consider the combined path $\ell + \ell'$ of length $k+m$ with initial point $(x_1,\alpha_1)$ and endpoint $(y_{m+1},\beta_{m+1})$. Notice that if $(\alpha_1^{(j)})_{1 \le j \le k+1}$ is a pyramid associated to $\ell$, then $\ell + \ell'$ will admit a pyramid $(\beta_1^{(j)})_{j=1}^{k+m+1}$ with $\alpha_1^{(j)}=\beta_1^{(j)}$ for every $1 \le j \le k+1$. Finally, we observe that $(\ell + \ell')^{-1} = (\ell')^{-1} + \ell^{-1}$.
\end{notation}

\begin{defi}
Let $Q \in \N$ and $c > 0$. We say $\mathcal{J}' \subseteq \mathcal{J}$ is a $(c,Q)$-\emph{regular} subset of $\mathcal{J}$ if $|\mathcal{J}'| \ge c |\mathcal{J}|$ and every $(x,\alpha) \in \mathcal{J}'$ is connected to $\ge c \frac{P^2}{(\log P)^2}$ other elements of $\mathcal{J}'$ by a split path mod $Q$ of length $1$. 
\end{defi}

We finish this section with the following combinatorial estimate which plays the role of the Blakley-Roy inequality in \cite{MRT, MRTTZ} and turns out to be rather advantageous in practice.

\begin{lema}
\label{regular}
There exists $c \sim 1$ such that, for $\gtrsim \frac{P'}{\log P'}$ primes $p' \in [P',2P']$, we can find a $(c,p')$-regular subset $\mathcal{J}_{p'}$ of $\mathcal{J}$.
\end{lema}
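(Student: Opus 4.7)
The plan is to apply Lemma \ref{base}, pigeonhole in the prime modulus $p'$, and then extract a regular subset from the resulting graph of split connections via an iterative pruning argument.

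From Lemma \ref{base}, each of the $\gtrsim (X/H)(P/\log P)^2$ valid split quadruples $((x_1,\alpha_1),(x_2,\alpha_2),p,q)$ satisfies the $\alpha$-condition for $\gtrsim P'/\log P'$ primes $p' \in [P',2P']$. Counting the resulting (quadruple, prime) triples and noting that there are only $\lesssim P'/\log P'$ primes in $[P',2P']$, a standard pigeonhole argument produces a set of $\gtrsim P'/\log P'$ primes $p'$ such that for each one, the number of quadruples that are valid mod $p'$ is $\gtrsim (X/H)(P/\log P)^2$. I would then fix any such $p'$ and form the graph $G_{p'}$ on $\mathcal{J}$ by declaring $v \sim w$ iff there exists a split path mod $p'$ of length $1$ between them.

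The key step is to show that both the average and the maximum degree in $G_{p'}$ are of order $D := (P/\log P)^2$. The max-degree bound is immediate from $|\mathcal{P}_1||\mathcal{P}_2|$ together with the $R$-separation of $\mathcal{J}$ which forces at most one endpoint per pair $(p,q)$. For a matching lower bound on the edge count, I would argue that each ordered pair $(v,w)$ can be realized by at most $O(1)$ prime pairs: the condition $|x_1/p - x_2/q| \lesssim H/(PK)$ forces $q/p$ into a window of length $\lesssim H/X$ around $x_2/x_1$, and since distinct pairs of primes in $[P,2P]$ yield distinct reduced fractions separated by $\ge 1/(4P^2)$, the number of admissible $(p,q)$ is $O(1 + P^2 H/X) = O(1)$ thanks to $P \lesssim H^{\varepsilon}$ and $H \le X^{1/2}$. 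This produces $\gtrsim (X/H)(P/\log P)^2$ edges.

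To finish, I would iteratively remove any vertex whose degree in the surviving graph drops below $cD$, for a small $c$ depending on the implicit constants. Each such removal destroys fewer than $cD$ edges, while the survivor still has at most $|\mathcal{J}_{p'}| D$ edges via the max-degree bound; a direct comparison then forces $|\mathcal{J}_{p'}| \gtrsim |\mathcal{J}|$. By construction the minimum degree in $\mathcal{J}_{p'}$ is $\ge cD$, giving the desired $(c',p')$-regular subset. The main obstacle is precisely the multiplicity bound in the third paragraph: if it failed, the quadruple count would only guarantee $\sim (X/H)(P/\log P)$ edges instead of $\sim (X/H)(P/\log P)^2$, in which case the pruning argument could not retain a linearly-large subset with the required degree; the Farey-fraction estimate, which decisively uses $PH \ll X$, is what lets this serve as a clean replacement for the Blakley--Roy inequality.
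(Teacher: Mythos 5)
Your proposal is correct and follows essentially the same route as the paper: pigeonhole over $p' \in [P',2P']$ using the quadruple count from Lemma \ref{base}, then an iterative vertex-deletion (pruning) argument balanced against the max-degree bound $\lesssim (P/\log P)^2$, with the key multiplicity observation that a pair of points of $\mathcal{J}$ admits only $O(1)$ (in the paper, at most one) compatible prime pairs $(p,q)$ because distinct fractions $q/p$ with $p,q \in [P,2P]$ are $1/(4P^2)$-separated while the constraint confines $q/p$ to a window of length $O(H/X)$. The only difference is cosmetic: you phrase the pruning in terms of graph edges and spell out the Farey-separation step that the paper leaves implicit.
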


\begin{proof}
For each prime $p' \in [P',2P']$ let $A_{p'}$ be the number of quadruples $((x_1,\alpha_1), (x_2,\alpha_2),p,q) \in \mathcal{J} \times \mathcal{J} \times \mathcal{P}_1 \times \mathcal{P}_2$ with $\| p \alpha_1 - q \alpha_2 \|_{p'} \lesssim PK/H$ and $|x_1/p - x_2/q| \lesssim H/(PK)$. By construction of $\mathcal{J}$, we know that 
$$\sum_{p' \in [P',2P']} A_{p'} \gtrsim \frac{X}{H} \left( \frac{P}{\log P} \right)^2 \frac{P'}{\log P'}.$$
By the prime number theorem, this means that we can find $\gtrsim \frac{P'}{\log P'}$ primes $p' \in [P',2P']$ with $A_{p'} \ge c_1 \frac{X}{H} \left( \frac{P}{\log P} \right)^2$, for some $c_1 \gtrsim 1$. In particular, we have $\ge c_1 \frac{X}{H} \left( \frac{P}{\log P} \right)^2$ paths mod $p'$ of length $1$ in $\mathcal{J}$.

Fix now such a choice of $p'$. Let $\delta > 0$ be a sufficiently small constant and let $\mathcal{J}_1$ be the set of elements of $\mathcal{J}$ that are connected by a path mod $p'$ of length $1$ to at most $\delta  \frac{P^2}{(\log P)^2}$ elements of $\mathcal{J}$. Recursively, let $\mathcal{J}_k$ be the set of elements of $\mathcal{J} \setminus \bigcup_{j=1}^{k-1} \mathcal{J}_{j}$ that are connected by a path mod $p'$ of length $1$ to at most $\delta   \frac{P^2}{(\log P)^2}$ elements of $\mathcal{J} \setminus \bigcup_{j=1}^{k-1} \mathcal{J}_j$. Let $k_0$ be the largest integer such that $\mathcal{J}_{k_0}$ is nonempty (which exists, since $\mathcal{J}$ is finite). Clearly, this means that every element of $\mathcal{J}_{p'} :=\mathcal{J} \setminus \bigcup_{j=1}^{k_0} \mathcal{J}_j$ is connected by a path mod $p'$ of length $1$ to at least $\delta   \frac{P^2}{(\log P)^2}$ elements of $\mathcal{J}_{p'}$. Furthermore, since given $(x_1,\alpha_1), (x_2,\alpha_2) \in \mathcal{J}$ there is at most one choice of $(p,q) \in [P,2P]$ with $|x_1/p - x_2/q| \lesssim H/(PK)$, the number of paths mod $p'$ of length $1$ in $\mathcal{J}_{p'}$ is at least
$$ c_1 \frac{P^2}{(\log P)^2} \frac{X}{H} - 2 \delta \frac{P^2}{(\log P)^2} \sum_{j=1}^{k_0} |\mathcal{J}_j|  \ge (c_1-4\delta) \frac{P^2}{(\log P)^2} \frac{X}{H}.$$
Since every element of $\mathcal{J}_{p'}$ belongs to at most $\lesssim \frac{P^2}{(\log P)^2}$ paths mod $p'$ of length $1$, if $\delta$ is sufficiently small it must be $|\mathcal{J}_{p'}| \gtrsim X/H$. The result follows.
\end{proof}

\section{Uniformity}

In this section we will boost the results of Section \ref{phase} by exploiting the additional information that paths have on the relative sizes of the primes involved. We begin with the following observation in this direction.

\begin{lema}
\label{uni}
There exists $c \gtrsim 1$ such that, given a path mod $Q$ in $\mathcal{J}$ of length $k \le c \log (X/H)$ consisting of primes $p_1, \ldots, p_k$,  $q_1, \ldots, q_k$, we have $\frac{\prod_{i=1}^m q_i}{\prod_{i=1}^m p_i} \sim 1$ for every $1 \le m \le k$.
\end{lema}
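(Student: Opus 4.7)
The plan is to iterate the relation $x_{i+1} = x_i \frac{q_i}{p_i} + O(H/K)$ provided by the definition of a path, and then extract the desired bound on the product of the primes by comparing it against the ratio $x_{m+1}/x_1$, which is $\sim 1$ because both coordinates lie in the window $[X/(10K), 2X/K]$ to which $\mathcal{J}$ is confined.

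First I would telescope: starting from $x_{i+1} = x_i \frac{q_i}{p_i} + e_i$ with $|e_i| \lesssim H/K$, an easy induction yields
\[
 x_{m+1} = x_1 \prod_{i=1}^m \frac{q_i}{p_i} + \sum_{i=1}^m e_i \prod_{j=i+1}^m \frac{q_j}{p_j}
\]
for every $1 \le m \le k$. Since every $p_i, q_i \in [P, 2P]$, each ratio $q_j/p_j$ lies in $[1/2, 2]$, so the accumulated error is bounded by $\sum_{i=1}^m |e_i| \cdot 2^{m-i} \lesssim 2^m \cdot H/K$.

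Next I would divide through by $x_1$, using $x_1 \gtrsim X/K$ (from $\mathcal{J} \subseteq [X/(10K),2X/K] \times \R$), to obtain
\[
 \prod_{i=1}^m \frac{q_i}{p_i} = \frac{x_{m+1}}{x_1} + O\!\left(\frac{2^m H}{X}\right).
\]
Choose the constant $c$ in the hypothesis $k \le c \log(X/H)$ small enough so that $c \log 2 < 1/2$, say; then $2^m \le 2^k \le (X/H)^{c \log 2}$ and the error term is at most $(H/X)^{1/2} = o(1)$ as $X \to \infty$ (and in any case much smaller than $1$ once $X$ is large with respect to the implicit constants). Since both $x_1$ and $x_{m+1}$ lie in $[X/(10K), 2X/K]$, their ratio $x_{m+1}/x_1$ is bounded above and below by absolute constants, and hence so is $\prod_{i=1}^m q_i/\prod_{i=1}^m p_i$, giving the conclusion.

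There is no serious obstacle: the only thing to watch is that the geometric growth $2^m$ of the error against the telescoped product stays subdominant, and this is precisely what dictates the restriction $k \lesssim \log(X/H)$ in the statement (with $c$ chosen as above). The same argument also shows that any smaller value of $c$ gives a correspondingly tighter bound on the ratio, but the statement only requires $\sim 1$.
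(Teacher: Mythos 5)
Your proof is correct and follows essentially the same route as the paper: both compare $\prod_{i=1}^m q_i/p_i$ with $x_{m+1}/x_1 \sim 1$ via the relation $x_{i+1} = x_i q_i/p_i + O(H/K)$ and use $k \le c\log(X/H)$ to keep the exponential-in-$k$ error subdominant. The only (minor) difference is that your telescoped error is $o(1)$, so you obtain the upper and lower bounds simultaneously, whereas the paper expands the product, concludes only $\lesssim 1$, and gets the reverse inequality by applying the same reasoning to the inverted path.
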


\begin{proof}
Fix the path and a choice of $1 \le m \le k$. Since for each $1 \le i \le m$ we have that there is some pair $x_i,x_{i+1}$ in $[X/(10K),2X/K]$ with $(q_i/p_i) x_i = x_{i+1} + O(H/K)$, we have that
$$ \frac{q_i}{p_i} = \frac{x_{i+1}}{x_i} + O\left(\frac{H}{X} \right).$$
In particular, 
$$ \prod_{i=1}^m \frac{q_i}{p_i} = \prod_{i=1}^m \left( \frac{x_{i+1}}{x_i} + O \left(\frac{H}{X} \right) \right).$$
We expand the product in the right into $2^m$ terms, the first of which is $x_{m+1}/x_1 \sim 1$. Since for every $S \subseteq \left\{ 1, \ldots, m \right\}$ it is clearly $\prod_{i \in S} \frac{x_{i+1}}{x_i} \lesssim B^k$ with $B \sim 1$, if we choose $c \gtrsim 1$ sufficiently small in the statement of the current lemma we see that the contribution of the remaining terms is at most 
$$ \lesssim (2B)^{k} \frac{H}{X} \lesssim 1.$$
This proves that $\frac{\prod_{i=1}^m q_i}{\prod_{i=1}^m p_i} \lesssim 1$, while the reverse inequality follows from considering the 'inverted' path and applying the same reasoning.
\end{proof}

\begin{coro}
\label{pd21}
Let the hypothesis be as in Lemma \ref{uni}. Let $1 \le i < j \le k+1$. If $(x_i,\alpha_i), (x_j,\alpha_j)$ are $i$th and $j$th elements of the path, then 
$$|x_i \prod_{t=i}^{j-1} q_t/p_t - x_j| = O((j-i)H/K).$$
\end{coro}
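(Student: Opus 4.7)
My plan is to telescope the defining approximate identity $x_t(q_t/p_t) = x_{t+1} + O(H/K)$ that comes with each step of a path. Setting $e_t := x_t(q_t/p_t) - x_{t+1}$, so that $|e_t| \lesssim H/K$, a straightforward induction on $j-i$ yields
\[
x_i \prod_{t=i}^{j-1} \frac{q_t}{p_t} \;=\; x_j \,+\, \sum_{s=i}^{j-1} e_s \prod_{t=s+1}^{j-1} \frac{q_t}{p_t},
\]
with the convention that the empty product equals $1$. The corollary will follow at once if I can show that each of the $j-i$ partial products on the right has size $O(1)$, since the sum will then be bounded by $\sum_{s=i}^{j-1}|e_s|\cdot O(1)\lesssim (j-i)H/K$.

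To bound those partial products I would invoke Lemma \ref{uni} applied to the sub-path obtained by restricting the original path to its $(s+1)$st through $j$th elements, together with the primes $p_{s+1},\ldots,p_{j-1}$, $q_{s+1},\ldots,q_{j-1}$ and the corresponding restrictions of the remaining data. This is itself a path in $\mathcal{J}$ of length $j-s-1 \le k \le c\log(X/H)$, so Lemma \ref{uni} applies; taking $m = j-s-1$ in its conclusion gives precisely $\prod_{t=s+1}^{j-1} q_t/p_t \sim 1$.

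I do not expect any real obstacle: the telescoping is mechanical, and the only thing to verify is that a contiguous subsequence of a path in $\mathcal{J}$ is again a path in $\mathcal{J}$, which is immediate from the definition since the required relations $x_t(q_t/p_t) = x_{t+1} + O(H/K)$ and the mod-$Q$ closeness conditions are inherited termwise. Combining the telescoping identity with the $O(1)$ bound on partial products then yields the claimed estimate.
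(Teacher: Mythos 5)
Your proposal is correct and matches the paper's (one-line) proof: the paper likewise iterates the relation $x_{t+1} = x_t q_t/p_t + O(H/K)$ and controls the accumulated factors via Lemma \ref{uni}. The only cosmetic difference is that you bound $\prod_{t=s+1}^{j-1} q_t/p_t$ by passing to a sub-path, whereas one can equally get it as the ratio of the two prefix products $\prod_{t=1}^{j-1}$ and $\prod_{t=1}^{s}$, each $\sim 1$ by Lemma \ref{uni} applied to the original path.
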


\begin{proof}
This follows immediately upon iterating the relation $x_{t+1} = x_t q_t/p_t + O(H/K)$ and using Lemma \ref{uni}.
\end{proof}

We now insert this information into our previous estimate on pre-paths.

\begin{lema}
\label{alf}
Let the hypothesis be as in Lemma \ref{uni}. Then, for every pyramid $(\alpha_1^{(j)})_{1 \le j \le k+1}$ associated to the path and every $2 \le j \le k+1$ and $1 \le m < j$, we have
$$ \| \left( \prod_{i=m}^{j-1} q_i  \right) \alpha_1^{(j)} - \alpha_1^{(m)} \|_Q \lesssim k \frac{K}{H} \frac{1}{\prod_{i=1}^{m-1} q_i}.$$
\end{lema}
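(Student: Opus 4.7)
The plan is to use a telescoping decomposition and reduce the whole statement to the estimate provided by Lemma \ref{premanli}, with the multiplicative bookkeeping smoothed out using Lemma \ref{uni}.

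First I would write
\[ \Bigl(\prod_{i=m}^{j-1} q_i\Bigr)\alpha_1^{(j)} - \alpha_1^{(m)} = \sum_{t=m}^{j-1} \Bigl(\prod_{i=m}^{t-1} q_i\Bigr)\bigl(q_t \alpha_1^{(t+1)} - \alpha_1^{(t)}\bigr), \]
an identity which telescopes under the convention that an empty product equals $1$. Applying the triangle inequality together with the integer-scalar bound $\|a x\|_Q \le a\,\|x\|_Q$ (valid for $a \in \N$) reduces matters to estimating $\|q_t \alpha_1^{(t+1)} - \alpha_1^{(t)}\|_Q$ for each $m \le t \le j-1$.

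Next, for each such $t$ I would observe that the first $t+1$ elements of each tuple, together with the primes $p_1,\ldots,p_t$ and $q_1,\ldots,q_t$, form a sub-pre-path of length $t$, and that its associated pyramid coincides with the restriction of the original pyramid to the first $t+1$ levels (an immediate induction, since $\alpha_1^{(s+1)}$ depends only on data with indices $\le s+1$). Since the original path has parameters $\epsilon_i=\epsilon_i' \lesssim PK/H$, Lemma \ref{premanli} applied to this sub-pre-path gives
\[ \|q_t \alpha_1^{(t+1)} - \alpha_1^{(t)}\|_Q \lesssim \frac{PK}{H}\Bigl(\prod_{i=1}^{a_t} p_i\Bigr)^{-1}\Bigl(\prod_{i=a_t}^{t-1} q_i\Bigr)^{-1},\]
where $a_t := \lfloor (t-1)/2\rfloor+1$ and I have used $t-\lceil (t-1)/2\rceil = a_t$ to re-express the second product over a range with lower endpoint $a_t$.

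Multiplying this bound by $\prod_{i=m}^{t-1} q_i$, the task is reduced to verifying
\[ \frac{\prod_{i=m}^{t-1} q_i}{\prod_{i=1}^{a_t} p_i \cdot \prod_{i=a_t}^{t-1} q_i} \cdot P \lesssim \frac{1}{\prod_{i=1}^{m-1} q_i}. \]
I would split into the cases $m \le a_t$ and $m > a_t$. In both, the quotient on the left is a ratio of two products of primes from our path, and since $k \lesssim \log(X/H)$, Lemma \ref{uni} lets me replace any initial segment $\prod_{i=1}^r q_i$ by $\prod_{i=1}^r p_i$ (and vice versa) at the cost of an absolute constant. After using this to normalize all products so that they start at index $1$, the numerator and denominator cancel down to $\prod_{i=1}^{m-1} q_i$ in the denominator together with one leftover factor $p_{a_t} \ge P$ that absorbs the $P$ on the left. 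Summing the resulting bound over the $j-m \le k$ terms produces the factor of $k$ in the statement. The only slightly delicate point, and the main place where care is needed, is the index bookkeeping with $a_t$ and the two disjoint ranges $[1,a_t]$ and $[a_t,t-1]$; once one rewrites everything in terms of partial products starting at index $1$, both cases fall into place uniformly.
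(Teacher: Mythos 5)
Your proof is correct and follows essentially the same route as the paper: the paper's "recursive evaluation" of $\left(\prod_{i=m}^{j-1} q_i\right)\alpha_1^{(j)}$ is exactly your telescoping sum, the per-step input is the same application of Lemma \ref{premanli} (restricted to the sub-pre-path of length $t$), and the prime-product bookkeeping is handled in both cases by Lemma \ref{uni} together with the spare factor $p_{a_t}\ge P$ absorbing the $P$ from $\epsilon\lesssim PK/H$. No gaps; the index manipulations you flag as delicate check out.
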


\begin{proof}
By considering shorter paths inside the original path, it will suffice to cover the case in which $\alpha_1^{(j)}$ is the top element of a pyramid associated to a path of length $j-1$. For every $m \le t \le j-1$ we have that $q_t \alpha_1^{(t+1)} \equiv \alpha_1^{(t)} + c_t \, \, (\text{mod }Q)$, where by Lemma \ref{premanli} and Lemma \ref{uni} we know that $c_t$ can be taken to be a real number satisfying
$$ | c_t \left( \prod_{i=1}^{t-1} q_i \right) | \lesssim \frac{P K}{H}  \frac{1 }{p_{ \lfloor (t-1)/2 \rfloor+1}} \prod_{i=1}^{ \lfloor (t-1)/2 \rfloor} q_i/p_i \lesssim \frac{K}{H}.$$
In particular,
$$ | c_t \left( \prod_{i=m}^{t-1} q_i \right) | \lesssim  \frac{K}{H} \frac{1}{\prod_{i=1}^{m-1} q_i}.$$
The result then follows evaluating $\left( \prod_{i=m}^{j-1} q_i \right) \alpha_1^{(j)} = \left( \prod_{i=m}^{j-2} q_i \right) (\alpha_1^{(j-1)} + c_{j-1})$ recursively.
\end{proof}

This, in turn, leads to the following bound.

\begin{lema}
\label{morealf}
Let the hypothesis be as in Lemma \ref{uni}. Then
$$ \| \left( \prod_{i=1}^{j-1} p_i \prod_{i=j}^{k} q_i \right) \alpha_1^{(k+1)} - \alpha_j^{(1)} \|_Q \lesssim k \frac{K}{H},$$
for every $1 \le j \le k+1$.
\end{lema}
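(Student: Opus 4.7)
The plan is to combine two applications of Lemma \ref{alf}: one on the full path to link $\alpha_1^{(k+1)}$ with $\alpha_1^{(j)}$ via the primes $q_j,\ldots,q_k$, and a second one on an inverted initial sub-path to link $\alpha_1^{(j)}$ with $\alpha_j^{(1)}$ via $p_1,\ldots,p_{j-1}$. The statement then follows from the triangle inequality.

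For $2 \le j \le k$, invoking Lemma \ref{alf} (with $j_\star = k+1$, $m_\star = j$ in its notation) and multiplying the resulting bound by $\prod_{i=1}^{j-1}p_i$ gives
$$\left\| \Bigl( \prod_{i=1}^{j-1} p_i \prod_{i=j}^{k} q_i \Bigr) \alpha_1^{(k+1)} - \Bigl( \prod_{i=1}^{j-1} p_i \Bigr) \alpha_1^{(j)} \right\|_Q \lesssim k \frac{K}{H} \cdot \frac{\prod_{i=1}^{j-1} p_i}{\prod_{i=1}^{j-1} q_i},$$
and by Lemma \ref{uni} this final ratio is $\sim 1$, leaving an error of $\lesssim kK/H$.

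For the remaining identification $(\prod_{i=1}^{j-1}p_i)\alpha_1^{(j)} \approx \alpha_j^{(1)}$, I would consider the initial segment $(x_1,\alpha_1),\ldots,(x_j,\alpha_j)$ with primes $p_1,\ldots,p_{j-1}, q_1,\ldots,q_{j-1}$ as a sub-path of length $j-1$ in $\mathcal{J}$. Unwinding the pyramid construction inductively (each entry $\alpha_m^{(t)}$ is built, via Lemma \ref{astart}, only from $\alpha_m^{(t-1)},\alpha_{m+1}^{(t-1)}$ and two primes, so when $t \le j$ every input lies within the first $j$ positions), the top of the sub-pyramid coincides with $\alpha_1^{(j)}$. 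Inverting this sub-path produces a valid path of length $j-1$ in $\mathcal{J}$ whose pyramid still tops out at $\alpha_1^{(j)}$, whose first row begins with $\alpha_j^{(1)}$, and whose $q$-primes are $p_{j-1},\ldots,p_1$ in that order. Applying Lemma \ref{alf} to this inverted sub-path with $j_\star = j$, $m_\star = 1$ yields
$$\left\| \Bigl( \prod_{i=1}^{j-1} p_i \Bigr) \alpha_1^{(j)} - \alpha_j^{(1)} \right\|_Q \lesssim (j-1)\frac{K}{H} \lesssim k \frac{K}{H},$$
and the triangle inequality closes the case. The boundary values $j=1$ and $j=k+1$ are immediate from a single one of the above estimates, since the other carries an empty product and is vacuous.

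The principal obstacle is bookkeeping rather than analytic depth: one must verify that the sub-pyramid top matches $\alpha_1^{(j)}$ in the original numbering, and that inverting a sub-path preserves its membership in $\mathcal{J}$. The latter uses $p_i/q_i \sim 1$ to flip the physical relation $x_i q_i/p_i = x_{i+1} + O(H/K)$ into $x_{i+1} p_i/q_i = x_i + O(H/K)$, so that the inverted sequence still satisfies the definition of a path.
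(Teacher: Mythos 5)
Your argument is correct and is essentially the paper's own proof: Lemma \ref{alf} applied to the full path gives the $q$-product estimate, Lemma \ref{alf} applied to the inverted initial segment (whose pyramid shares the top $\alpha_1^{(j)}$) gives the $p$-product estimate, and the triangle inequality together with Lemma \ref{uni} (to absorb the factor $\prod p_i / \prod q_i \sim 1$) finishes. Your extra bookkeeping about sub-pyramids and the invertibility of sub-paths only makes explicit what the paper leaves implicit.
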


\begin{proof}
By Lemma \ref{alf} we know that 
$$ \| \left( \prod_{i=j}^{k} q_i \right) \alpha_1^{(k+1)} - \alpha_1^{(j)} \|_Q \lesssim k \frac{K}{H} \frac{1}{\prod_{i=1}^{j-1} q_i}.$$
Considering the 'inverted' path that goes from $ \alpha_j^{(1)}$ to $\alpha_1^{(1)}$ and applying Lemma \ref{alf} again, we also see that
$$ \| \left( \prod_{i=1}^{j-1} p_i \right) \alpha_1^{(j)} - \alpha_j^{(1)} \|_Q \lesssim k \frac{K}{H}.$$
The result then follows from the triangle inequality and Lemma \ref{uni}.
\end{proof}

\section{Connectedness}

The purpose of this section is to show that a significant proportion of the elements of $\mathcal{J}$ can be connected with each other through a path of reasonable length. Our first observation is that once the initial point of a path of short length is fixed, there are only a few ways of reaching the same end point.

 \begin{lema}
\label{olden}
Let $1 \le k \le \frac{\log (X/(HB \log X))}{2 \log (2P)}$ for some sufficiently large constant $B \sim 1$. Then, for each pair $(x,\alpha), (y,\beta) \in \mathcal{J}$, there are at most $(2k)!$ paths mod $1$ of length $k$ that connect them.
 \end{lema}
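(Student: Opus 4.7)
The plan is to prove that the unordered $2k$-element set of primes $\{p_1, \ldots, p_k, q_1, \ldots, q_k\}$ involved in any length-$k$ path from $(x,\alpha)$ to $(y,\beta)$ is uniquely determined by the pair $(x,y)$. Once this is established, bounding the number of orderings of this set will yield the desired $(2k)!$ estimate.

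First I would apply Corollary \ref{pd21} with $i=1$ and $j=k+1$ to obtain $|x \prod_{i=1}^k q_i/p_i - y| \lesssim kH/K$, which, since $x \sim X/K$, rewrites as
\[ \left| \prod_{i=1}^k \frac{q_i}{p_i} - \frac{y}{x} \right| \lesssim \frac{kH}{X}. \]
Because the $2k$ primes are distinct, the fraction $\prod q_i/\prod p_i$ is already in lowest terms, with both numerator and denominator at most $(2P)^k$. Any two distinct reduced fractions with denominators at most $N$ differ by at least $1/N^2$, so the approximation above uniquely determines $\prod q_i/\prod p_i$ provided $kH/X < \tfrac{1}{2}(2P)^{-2k}$. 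The hypothesis $k \le \frac{\log(X/(HB\log X))}{2\log(2P)}$ yields $(2P)^{2k} \le X/(HB\log X)$, and since $k \lesssim \log X$ throughout this range, taking $B$ sufficiently large (but still $\sim 1$) secures the inequality. Unique factorization then recovers the sets $\{p_1,\ldots,p_k\}$ and $\{q_1,\ldots,q_k\}$ individually from the denominator and numerator of the reduced fraction.

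Having fixed this $2k$-element set of primes, every path corresponds to an ordering of these primes into the tuple $(p_1, q_1, p_2, q_2, \ldots, p_k, q_k)$; since the partition into $p$'s and $q$'s is forced, there are at most $(k!)^2$ such orderings. For each ordering, the intermediate elements $(x_i,\alpha_i) \in \mathcal{J}$ are determined recursively by $x_{i+1} = x_i q_i/p_i + O(H/K)$ together with the $H/K$-separation of first coordinates of $\mathcal{J}$, so each step contributes only a bounded number of candidates, giving an overall per-ordering multiplicity of at most $C^{k-1}$ for some absolute constant $C$. The total count $(k!)^2 C^{k-1}$ is then absorbed into $(2k)! = (k!)^2 \binom{2k}{k} \sim (k!)^2 \cdot 4^k$, yielding the stated bound.

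The main obstacle is to make this last absorption precise. One needs to control the per-step intermediate-point multiplicity $C$ (which depends on the implicit constant in $x_{i+1} = x_i q_i/p_i + O(H/K)$) and to choose $B$ large enough (while still $\sim 1$) so that both the uniqueness of the reduced fraction and the comparison $C^{k-1} \le \binom{2k}{k}$ hold uniformly for all $k$ in the stated range. Apart from this bookkeeping, the argument is a direct consequence of the rigidity provided by Corollary \ref{pd21} and the separation built into the configuration $\mathcal{J}$.
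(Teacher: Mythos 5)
Your core argument is the same as the paper's: Corollary \ref{pd21} shows that $\prod_i q_i/\prod_i p_i$ approximates $y/x$ to within $O(kH/X)$, and since two distinct reduced fractions with numerator and denominator at most $(2P)^k$ differ by at least $(2P)^{-2k}$, the hypothesis on $k$ (with $B$ large enough to beat the implicit constants, using $k\le \log X$) forces every path joining $(x,\alpha)$ to $(y,\beta)$ to use the same set of primes --- indeed the same $p$-set and $q$-set separately, which is your slightly sharper $(k!)^2$ count. The paper phrases this as comparing two paths directly (one of them inverted) and lower-bounding the nonzero quantity in (\ref{old28}) by $(2P)^{-2k}$, which is equivalent to your spacing argument; up to this point your proposal is fine.

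The problem is your final absorption step. The per-step multiplicity constant $C$ in $x_{i+1}=x_iq_i/p_i+O(H/K)$ is the implicit constant coming from Lemma \ref{base} (allowed to depend on $\rho,\eta$) and is in no way controlled by $B$: enlarging $B$ only shrinks the admissible range of $k$, so the claim that one can choose $B$ so that $C^{k-1}\le\binom{2k}{k}\sim 4^k$ is unjustified, and false whenever $C>4$ (note $k=k_0$ grows with $X$, so this is not a finite issue). The paper avoids this entirely: its proof reduces the lemma to the statement that the prime sets of any two connecting paths coincide, i.e.\ it implicitly identifies a path with its ordered tuple of primes once the endpoints are fixed, and under that counting convention the bound $(2k)!$ follows with no multiplicity factor. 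If instead you insist on counting intermediate elements of $\mathcal{J}$ as part of the path, the honest output of your argument is a bound of the shape $(k!)^2C^{k}$, i.e.\ $(2k)!\,O(1)^k$, not $(2k)!$; this weaker bound would in fact still suffice in the only application (Lemma \ref{pairs}), where there is slack of size $P^{1/2}$, resp.\ $H^{\delta}$, against such $O(1)^{k_0}$ losses. So either adopt the paper's counting and drop the multiplicity factor, or keep it and argue that the weaker bound is enough downstream --- but do not claim it can be absorbed by adjusting $B$.
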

 
 \begin{proof}
It will suffice to show that if $(x,\alpha), (y, \beta) \in \mathcal{J}$ are connected by two paths consisting of primes $\left\{ p_1,\ldots,p_{k},q_1,\ldots,q_{k} \right\}$ and $\left\{ p_1',\ldots,p_{k}',q_1',\ldots,q_{k}' \right\}$, respectively, then these two sets of primes must coincide. Let us proceed by contradiction. By Corollary \ref{pd21} we know that 
$$ \left| \left( \prod_{i=1}^{k} \frac{q_i}{p_i} \right) x - y \right| \lesssim \frac{k H}{K}.$$
Similarly, considering the other path and using Lemma \ref{uni} we see that we must also have
$$ \left| \left( \prod_{i=1}^{k} \frac{p_i'}{q'_i} \right) y - x \right| \lesssim  \frac{k H}{K}.$$
Combining both estimates using Lemma \ref{uni} again and the triangle inequality, we obtain
\begin{equation}
\label{old28}
 \left| \prod_{i=1}^{k} \frac{q_i}{p_i} \prod_{i=1}^{k} \frac{p_i'}{q'_i}  - 1\right| \lesssim \frac{kH}{X}.
 \end{equation}
Since the primes in each path are all distinct by definition and the paths do not consist of exactly the same primes, the left-hand side cannot be $0$. But this means it must have size at least $(2P)^{-2k}$. The result then follows from our size hypothesis on $k$.
 \end{proof}
 
 We will be needing the following bound from \cite{MRTTZ}.
 
 \begin{lema}[\cite{MRTTZ}, Lemma 6.1]
 \label{products28}
Let $r \in \N$, $A \ge 1$ and $P_0,N \ge 3$. Then, the number of $2r$-tuples of primes $(p_{1,1},\ldots,p_{1,r},p_{2,1},\ldots,p_{2,r}) \subseteq [P_0,2P_0]^{2r}$ satisfying
$$ \left| \prod_{i=1}^r p_{1,i} - \prod_{i=1}^r p_{2,i} \right| \le A \frac{(2P_0)^{r}}{N}$$
is bounded by
$$ \lesssim A r!^2 (2 P_0)^r \left( \frac{(2 P_0)^r}{N} + 1 \right).$$ 
\end{lema}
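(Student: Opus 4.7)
The plan is to reduce the problem to counting integer points in a short interval, exploiting unique factorization in $\Z$ to pass between products and multisets of primes. First, I would partition the ordered tuples $(p_{1,1},\ldots,p_{1,r})$ and $(p_{2,1},\ldots,p_{2,r})$ according to the multisets of their entries; since each multiset of $r$ primes arises from at most $r!$ orderings, this costs a factor of $r!^2$ and reduces the count to the number of pairs of multisets $(S_1,S_2)$ of $r$ primes from $[P_0,2P_0]$ (with multiplicity) satisfying $\left| \prod_{p \in S_1} p - \prod_{p \in S_2} p \right| \le A(2P_0)^r/N$.

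Next, I would invoke unique factorization: each multiset of primes from $[P_0,2P_0]$ is uniquely determined by its product in $\Z$, so pairs $(S_1,S_2)$ as above are in bijection with pairs of integers $(M_1,M_2) \in [P_0^r,(2P_0)^r]^2$ with $|M_1 - M_2| \le A(2P_0)^r/N$. For each fixed $M_1$, the number of admissible $M_2 \in \Z$ is bounded by the count of integers in an interval of length $2A(2P_0)^r/N$, which (using $A \ge 1$) is $\lesssim A\bigl((2P_0)^r/N + 1\bigr)$. Finally, the number of possible values of $M_1$ is at most $(2P_0)^r$, the cardinality of the ambient range $[P_0^r,(2P_0)^r]$. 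Multiplying the three bounds yields the claimed estimate $\lesssim A r!^2 (2P_0)^r ((2P_0)^r/N + 1)$.

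I do not expect a genuine obstacle: the argument uses only unique factorization in $\Z$ together with a trivial integer count in an interval. The slack in the bound (for instance, a factor of $r!$ over what a more careful accounting of the number of distinct multisets would give, namely $\binom{\pi(2P_0)-\pi(P_0-1)+r-1}{r} \le (2P_0)^r/r!$) appears deliberate and merely reflects the form convenient for the application in Section 5.
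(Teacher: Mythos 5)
Your argument is correct: grouping ordered tuples into multisets (cost $r!^2$), identifying multisets with their products via unique factorization, and counting integer pairs $(M_1,M_2)$ with $M_1 \le (2P_0)^r$ and $M_2$ in an interval of length $2A(2P_0)^r/N$ gives exactly the stated bound. Note that this paper does not prove the lemma itself but imports it from \cite{MRTTZ}, and your elementary divisor-style count is essentially the same standard argument used there; the only caveat is your closing aside, where the bound $\binom{\pi(2P_0)-\pi(P_0-1)+r-1}{r} \le (2P_0)^r/r!$ need not hold for $r$ much larger than $P_0$, but this remark plays no role in the proof.
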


The use of this lemma is that it will allow us to assume that no pair of paths mod $1$ of reasonable length that connect the same points share any primes in common. Essentially the same argument was already used in \cite{MRT, MRTTZ}. Precisely, we have the following bound on the number of exceptions.

\begin{coro}
\label{ls28}
Let $(x,\alpha) \in \mathcal{J}$ and let $k$ be as in Lemma \ref{uni}. Then, the number of pairs of split paths mod $1$ of length $k$ with initial point $(x,\alpha)$, sharing the same end point and having at least one prime in common is bounded by
$$ \lesssim (2k)!^2 \frac{(2 P)^{2k}}{\log P} \left( \frac{kH}{X}(2 P)^{2k-1} + 1 \right).$$ 
\end{coro}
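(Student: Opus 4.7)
The plan is to combine inequality (\ref{old28}) from the proof of Lemma \ref{olden} with the counting bound of Lemma \ref{products28}. From (\ref{old28}), any pair of paths from $(x,\alpha)$ to a common endpoint $(y,\beta)$, with primes $(p_1, \ldots, p_k, q_1, \ldots, q_k)$ and $(p'_1, \ldots, p'_k, q'_1, \ldots, q'_k)$, satisfies
\[
\left|\prod_{i=1}^k q_i \prod_{i=1}^k p'_i - \prod_{i=1}^k p_i \prod_{i=1}^k q'_i\right| \lesssim \frac{kH}{X}(2P)^{2k}.
\]
I would now use the assumption that the two paths share a prime $p^*$. Because both paths are split, $p^*$ must lie in the same class $\mathcal{P}_l$ in each of them, hence occupies a slot of the same type ($p$-slot or $q$-slot); say $p^* = p_{i^*} = p'_{j^*}$, the other case being symmetric. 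Dividing the displayed inequality by $p^* \ge P$ then produces an analogous identity between ordered products of $2k-1$ primes from $[P,2P]$ on each side, with the right-hand side improved to $\lesssim \frac{kH}{X}(2P)^{2k-1}$.

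Next, I would apply Lemma \ref{products28} with $r = 2k-1$, $P_0 = P$, and parameters chosen so that $A(2P)^{2k-1}/N \sim \frac{kH}{X}(2P)^{2k-1}$, i.e. $A=1$ and $N = X/(kH)$. This bounds the number of ordered pairs of $(2k-1)$-tuples of primes from $[P,2P]$ satisfying the reduced identity by
\[
\lesssim (2k-1)!^2\,(2P)^{2k-1}\left(\frac{(2P)^{2k-1}kH}{X}+1\right).
\]
Finally, I would multiply by the number of choices for the common prime itself, which is $\lesssim P/\log P$ by the prime number theorem (this is the source of the $1/\log P$ factor in the target), by the number of choices $\lesssim k^2$ for the positions $i^*, j^*$, and by an absolute factor of $2$ accounting for whether $p^* \in \mathcal{P}_1$ or $p^* \in \mathcal{P}_2$. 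Using $(2k)!^2 \sim k^2 (2k-1)!^2$ and $(2P)^{2k} \sim P \cdot (2P)^{2k-1}$, the product rearranges into the stated bound.

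The main obstacle, though modest, is recognising that one should reduce from products of $2k$ primes to $2k-1$ primes \emph{before} invoking Lemma \ref{products28}; without the division by $p^*$, the lemma would produce $(2P)^{2k}$ in place of $(2P)^{2k-1}$ inside the parenthesis and the crucial $1/\log P$ gain would be absent. One minor issue worth flagging is that a pair of paths with several shared primes is counted once per common prime, but this overcounting is harmless for an upper bound.
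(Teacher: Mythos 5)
Your argument is correct and matches the paper's proof essentially step for step: both derive (\ref{old28}) via Corollary \ref{pd21}, use the disjointness of $\mathcal{P}_1,\mathcal{P}_2$ (splitness) to find a prime dividing both products and factor it out, apply Lemma \ref{products28} with $r=2k-1$ and $N=X/(kH)$, and then multiply by the $\lesssim k^2\,P/\log P$ choices of the shared prime and its positions. The point you flag as the main obstacle (reducing to products of $2k-1$ primes before invoking Lemma \ref{products28}) is exactly the paper's "factoring it out" step, so nothing further is needed.
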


\begin{proof}
Let $(p_1,\ldots,p_k,q_1,\ldots,q_k)$ and $(p_1',\ldots,p_k',q_1',\ldots,q_k')$ be the sets of distincts primes corresponding to two paths with initial point $(x,\alpha)$ and common end point $(y,\beta)$, for some $(y,\beta) \in \mathcal{J}$. Using Corollary \ref{pd21} as in the proof of Lemma \ref{olden} we obtain (\ref{old28}), which we may rewrite as
$$ \left| \prod_{i=1}^{k} q_i p_i' - \prod_{i=1}^{k} p_i q_i' \right| \lesssim \frac{kH}{X} (2P)^{2k}.$$
Since $\mathcal{P}_1$ and $\mathcal{P}_2$ are disjoint and the paths share at least a prime in common, we can find a prime appearing in both products on the left-hand side. Factoring it out and applying Lemma \ref{products28} with $r=2k-1$ and $N=\frac{X}{kH}$, we see that the ordered tuple of $4k-2$ remaining primes belongs to a set $S$ of size
$$ \lesssim (2k-1)!^2 (2 P)^{2k-1} \left( \frac{kH}{X}(2 P)^{2k-1} + 1 \right).$$ 
The result follows observing that, for each such element of $S$, there are at most $(2k)^2 \frac{P}{\log P}$ choices of ordered sets of primes $(p_1,\ldots,p_k,q_1,\ldots,q_k)$ and $(p_1',\ldots,p_k', q_1',\ldots,q_k')$ that can lead to it in the above manner.
\end{proof}

We now come to the main estimate of this section, which shows that a large number of elements $(y,\beta) \in \mathcal{J}$ can be connected to a fixed element $(x_0,\alpha_0) \in \mathcal{J}$ through two different paths of small length. In the next section, these two paths will be combined to a single path going through $(y,\beta)$ and having $(x_0,\alpha_0)$ as both its initial and end point.
 
 \begin{lema}
 \label{pairs}
Let $k_0 = \lfloor  \frac{\log (X/(HB \log X))}{2 \log (2P)} \rfloor$ for some sufficiently large $B \sim 1$. Fix $\delta > 0$. Assume $\varepsilon > 0$ in Lemma \ref{base} is sufficiently small with respect to $\delta$, $H \ge \exp(C (\log X)^{1/2} (\log \log X)^{1/2})$ with $C$ sufficiently large with respect to $\delta$ and $\varepsilon$ and $X$ is sufficiently large with respect to $\delta, \varepsilon$ and $C$. Then, there exist $\tau \sim 1$, $(x_0,\alpha_0) \in \mathcal{J}$ and $\mathcal{J}_0 \subseteq \mathcal{J}$ with $|\mathcal{J}_0| \gtrsim \frac{X}{H^{1+\delta}}$ such that the following holds. For every $(y,\beta) \in \mathcal{J}_0$ there exists some $Q_y$ which is the product of $\gtrsim \tau^{k_0} \frac{P'}{\log P'}$ different primes in $[P',2P']$ and such that there are two split paths mod $Q_y$ of length $k_0+2$ connecting $(x_0,\alpha_0)$ with $(y,\beta)$. Furthermore, the two paths share no prime in common.
 \end{lema}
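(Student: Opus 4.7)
The plan is to fix a well-chosen starting point $u_0 = (x_0,\alpha_0)\in\mathcal{J}$, construct a large reservoir of split paths of length $k_0+2$ emanating from $u_0$, and then combine Cauchy--Schwarz with Corollary \ref{ls28} and Lemma \ref{olden} to harvest $\gtrsim X/H^{1+\delta}$ endpoints, each of which admits two prime-disjoint paths from $u_0$ valid modulo a product of many primes.

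\emph{Setup.} I would first apply Lemma \ref{regular} to obtain, for some $\tilde c\sim 1$ and $\gtrsim P'/\log P'$ primes $p'\in[P',2P']$, a $(\tilde c,p')$-regular subset $\mathcal{J}_{p'}$. A double counting over the pairs $(u,p')$ with $u\in\mathcal{J}_{p'}$ yields a subset $\mathcal{J}^*\subseteq\mathcal{J}$ of size $\gtrsim X/H$ such that every $u\in\mathcal{J}^*$ belongs to $\mathcal{J}_{p'}$ for a set $R(u)\subseteq[P',2P']$ of primes with $|R(u)|\gtrsim P'/\log P'$; the initial point $u_0$ is selected from $\mathcal{J}^*$ by a final averaging.

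\emph{Counting paths and pairs.} For each $p'\in R(u_0)$, iterating the regularity of $\mathcal{J}_{p'}$ inside itself produces $N_{p'}\gtrsim(\tilde c P^2/(\log P)^2)^{k_0+2}$ split paths mod $p'$ of length $k_0+2$ starting at $u_0$ and remaining in $\mathcal{J}_{p'}$. For each such path $\ell$, let $G(\ell)\subseteq[P',2P']$ denote the set of primes for which $\ell$ is a valid split path. Cauchy--Schwarz on the endpoint map (image of size $\lesssim X/H$) gives $\ge N_{p'}^2 H/X$ ordered pairs of paths valid mod $p'$ with a common endpoint, from which Corollary \ref{ls28} removes $\lesssim (2(k_0+2))!^2\,(X/H)/\log P$ pairs sharing a prime. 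Summing over $p'\in R(u_0)$ and applying $\sum_{p'} N_{p'}^2\ge(\sum_{p'} N_{p'})^2/|R(u_0)|$ leaves
\[E \,:=\, \sum_{\text{good }(\ell_1,\ell_2)} \!|G(\ell_1)\cap G(\ell_2)| \,\gtrsim\, \frac{P'}{\log P'}\left(\frac{\tilde c P^2}{(\log P)^2}\right)^{\!2(k_0+2)}\!\frac{H}{X},\]
where ``good'' means common endpoint and disjoint path primes. Using $(2P)^{2k_0}\sim X/(H\log X)$ and $\log P\sim\varepsilon\log H$, a routine calculation shows that the hypothesis $H\ge\exp(C(\log X)^{1/2}(\log\log X)^{1/2})$ with $C$ sufficiently large in terms of $\varepsilon$ and $\delta$ is exactly what makes the Cauchy--Schwarz gain dominate the Corollary \ref{ls28} loss.

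\emph{Extraction.} The number $G$ of good pairs is $G\le(2(k_0+2))!^2\,|\mathcal{J}|\lesssim(2k_0)!^2\,X/H$ by Lemma \ref{olden}. A first-moment argument on $E$ then produces $\gtrsim G$ good pairs with $|G(\ell_1)\cap G(\ell_2)|\ge\tau^{k_0}\,P'/\log P'$ for some $\tau\sim 1$; the inequality $E\gtrsim G(P'/\log P')$ needed to carry this out reduces, via the same calculation, to the hypothesis on $H$. Dividing by the $\le(2k_0)!^2$ good pairs supported at any given endpoint yields $\gtrsim X/H^{1+\delta}$ distinct endpoints $(y,\beta)$ admitting such a pair; for each, we set $Q_y$ to be the product of any $\lceil\tau^{k_0}P'/\log P'\rceil$ primes in $G(\ell_1)\cap G(\ell_2)$.

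\emph{Main obstacle.} The decisive point is the threefold balance in the middle paragraphs: the Cauchy--Schwarz gain $H/X$, the factorial losses $(2k_0)!^2$ from Corollary \ref{ls28} and Lemma \ref{olden}, and the logarithmic losses $(\log P)^{-4k_0-8}$ from the branching at each step. With $k_0\sim\log(X/H)/(2\log P)$ and $\log P\sim\varepsilon\log H$, these reconcile precisely when $(\log H)^2\gtrsim(\log X)(\log\log X)$, which is the origin of the hypothesis on $H$ in the statement.
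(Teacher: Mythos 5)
Your overall architecture (regular sets from Lemma \ref{regular}, a well-connected base point $u_0$, counting length-$(k_0+2)$ split paths, Cauchy--Schwarz over endpoints, Corollary \ref{ls28} to discard pairs sharing a prime) matches the paper, and your "main obstacle" paragraph correctly identifies where the hypothesis on $H$ enters. But the Extraction step has a genuine gap. You bound the number $G$ of good pairs by $(2(k_0+2))!^2\,|\mathcal{J}|$ "by Lemma \ref{olden}", and later bound the pairs per endpoint by $(2k_0)!^2$, in both cases applying Lemma \ref{olden} to paths of length $k_0+2$. Lemma \ref{olden} requires $k \le \frac{\log(X/(HB\log X))}{2\log(2P)}$, and $k_0$ is by definition the floor of exactly this quantity, so $k_0+2$ lies outside its range; moreover the conclusion genuinely fails there, since $(2P)^{-2(k_0+2)}$ is smaller than the error $kH/X$ in (\ref{old28}) by a factor of order $(2P)^4/\log X$. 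Worse, the claimed bound on $G$ is not merely unproved but false: your own construction produces $\gtrsim c^{k_0}(P/\log P)^{4k_0+8}H/X$ good pairs, which under the stated hypotheses exceeds $(2k_0)!^2 X/H$ by roughly a power of $P$. The paper's fix is to decompose each path as $\ell_0+\ell_1$ with $\ell_0$ of length $k_0$ (inside the range of Lemma \ref{olden}) and $\ell_1$ of length $2$ counted trivially, giving the correct per-endpoint bound $(2k_0)!^2(P/\log P)^8$.

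This correction is harmless for your count of distinct endpoints (the extra $(P/\log P)^8$ is absorbed into $H^{\delta}$ since $\varepsilon$ is small with respect to $\delta$), but it is fatal for your global first-moment extraction of the common prime set: the inequality $E \gtrsim G\cdot\tau^{k_0}P'/\log P'$ you need now reads $(P/\log P)^{4k_0} \gtrsim c^{-2k_0}(2k_0)!^2 (X/H)^2$, which is impossible because $(P/\log P)^{4k_0} \le (X/(HB\log X))^2(2\log P)^{-4k_0}$. In other words, averaging $|G(\ell_1)\cap G(\ell_2)|$ over \emph{all} good pairs dilutes the count of common primes $p'$ by factors like $(2k_0)!^2(\log X)^2(2\log P)^{4k_0}$ (and in some regimes powers of $P$), so you cannot conclude $|G(\ell_1)\cap G(\ell_2)|\gtrsim\tau^{k_0}P'/\log P'$ with $\tau\sim 1$, which is exactly the strength needed later to make $\gcd(Q_{y_0},Q_y)$ huge. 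The paper avoids this by localizing before counting: it first pigeonholes (comparing the $\gtrsim c^{k_0}(P/\log P)^{2k_0+4}$ paths mod $p'$ for each of $\gtrsim P'/\log P'$ primes against the $\lesssim D^{k_0}(P/\log P)^{2k_0+4}$ paths mod $1$) to a family $\mathcal{R}$ of paths each of which is individually a split path mod $p'$ for $\gtrsim c^{k_0}P'/\log P'$ primes, and then applies Cauchy--Schwarz \emph{within each endpoint fiber} $\mathcal{R}_y$, so the normalization is $|\mathcal{R}_y|^2$ rather than a global pair count; this preserves the $\tau^{k_0}$ quality. (A minor further slip: Corollary \ref{ls28} does not remove only $\lesssim (2(k_0+2))!^2 (X/H)/\log P$ pairs; its bound carries extra factors of order $k(2P)^7/(\log X)^2$, though the comparison with the main term still closes under the stated hypotheses.)
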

 
 \begin{proof}
During the proof we will let $c \sim 1$ denote a constant that may change at each occurrence. By Lemma \ref{regular} and the pigeonhole principle, we may locate some $(x_0,\alpha_0) \in \mathcal{J}$ that belongs to $\mathcal{J}_{p'}$ for $\gtrsim \frac{P'}{\log P'}$ primes $p' \in [P',2P']$. By regularity of $\mathcal{J}_{p'}$ this means that for each such $p'$ there are $\gtrsim c^{k_0} \left( \frac{P}{\log P} \right)^{2k_0+4} $ split paths mod $p'$ of length $k_0+2$ in $\mathcal{J}_{p'}$ with initial point $(x_0,\alpha_0)$.
 
 Since there are at most $\le D^{k_0} \left( \frac{P}{\log P} \right)^{2k_0+4}$ such paths mod $1$ in $\mathcal{J}$ for some $D \sim 1$, we conclude that there are $\gtrsim c^{k_0} \left( \frac{P}{\log P} \right)^{2k_0+4}$ split paths mod $1$ of length $k_0+2$ with initial point $(x_0,\alpha_0)$ that are also split paths mod $p'$ for $\gtrsim  c^{k_0}  \frac{P'}{\log P'}$ choices of $p' \in [P',2P']$ (which depend on each path). In particular, this means they are split paths mod $Q$, where $Q$ is the product of such primes. Write $\mathcal{R}$ for this set of paths.
 
 Write $\mathcal{R}_y$ for the paths in $\mathcal{R}$ that have endpoint $(y,\beta)$. By construction of $\mathcal{R}$ and the Cauchy-Schwarz inequality, we see that there must be $\gtrsim c^{k_0} |\mathcal{R}_y|^2$ pairs of paths in $\mathcal{R}_y$ such that both paths are split paths mod $p'$ for $\gtrsim c^{k_0} \frac{P'}{\log P'}$ primes $p' \in [P',2P']$ in common. In particular, they will both be split paths mod $Q$, with $Q$ the product of these primes. It follows that the number of pairs of paths in $\mathcal{R}$ having the same endpoint and with both being split paths mod $p'$ for $\gtrsim c^{k_0} \frac{P'}{\log P'}$ primes $p' \in [P',2P']$ is, by Cauchy-Schwarz, at least 
$$ \gtrsim c^{k_0} \frac{|\mathcal{R}|^2 H}{X}  \gtrsim c^{k_0} \frac{H}{X} \left( \frac{P}{\log P} \right)^{4k_0+8}.$$
Since by our hypothesis on $\varepsilon, C$ and $X$ we have that
$$ c^{-k_0} (2(k_0+2))!^2 k_0 (\log P)^{4 k_0+7} (\log X) < P^{1/2},$$
say, it also follows from Corollary \ref{ls28} and our choice of $k_0$ that at least half of these pairs of paths, say, do not share any primes from $[P,2P]$ in common. 

Let now $(y,\beta) \in \mathcal{J}$. If $\ell$ is a path mod $1$ in $\mathcal{R}_y$, it can be written as $\ell_0+\ell_1$, where $\ell_0$ is a path mod $1$ of length $k_0$ and $\ell_1$ is a path mod $1$ of length $2$ with end point $(y,\beta)$. There are $\lesssim \left( \frac{P}{\log P} \right)^4$ choices of $\ell_1$ and each such choice fixes the end point of $\ell_0$. On the other hand, by Lemma \ref{olden} we know that there are at most $(2k_0)!$ choices of $\ell_0$ with the same endpoint. We thus conclude that $(y,\beta)$ is the common endpoint of $\lesssim (2k_0)!^2 \left( \frac{P}{\log P} \right)^8$ of the pairs of paths we have constructed and therefore, by our lower bound on $|\mathcal{R}|$ and our choice of $k_0$, there are
$$ \gtrsim \frac{c^{k_0}}{(2k_0)!^2} \frac{|\mathcal{R}|^2 H}{X}\left( \frac{P}{\log P} \right)^{-8} \gtrsim  \frac{c^{k_0}}{(2k_0)!^2} \frac{X}{H (\log X)^2} P^{-4} (\log P)^{-4 k_0},$$
different elements $(y,\beta) \in \mathcal{J}$ that are connected to $(x_0,\alpha_0)$ by a pair of paths of the desired form. The result then follows, since 
$$ c^{-k_0} (2k_0)!^2 P^4 (\log P)^{4 k_0} (\log X)^2 < H^{\delta},$$
under our hypothesis on $\varepsilon, H$ and $X$.
\end{proof}

\section{Creating a global frequency}

In this section we conclude the proof of Theorem \ref{1}. We begin by finding choices of $T_y$ and $q_y$ that work for each $(y,\beta) \in \mathcal{J}_0$ and then deduce that a common choice of $T$ and $q$ works for many of these elements.

\begin{prop}
Let the notation be as in Lemma \ref{pairs}. Then, for every $(y,\beta) \in \mathcal{J}_0$ there exists some $T_y \in \R$ with $|T_y| \lesssim X^2/H^{2-\delta}$ such that
\begin{equation}
\label{ff2}
\alpha_0 \equiv \frac{a_y}{d_y} Q_y + \frac{T_y}{x_0} + O ( H^{\delta-1} ) \, \, (\text{mod }Q_y),
 \end{equation}
and
\begin{equation}
\label{ff28}
 \beta \equiv \frac{b_y}{d_y} Q_y + \frac{T_y}{y} + O(H^{2\delta-1}) \, \, (\text{mod }Q_y),
 \end{equation}
for some integers $a_y, b_y, d_y \lesssim H^{\delta}$.
\end{prop}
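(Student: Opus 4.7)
The idea is to extract from a single pyramid, constructed from the two paths provided by Lemma~\ref{pairs}, both the arithmetic piece $a_y Q_y/d_y$ (via the rational part of the top element) and the archimedean piece $T_y/x_0$ (via its small fluctuation). Fix $(y,\beta) \in \mathcal{J}_0$ and let $\ell,\ell'$ be the two split paths of length $k_0+2$ from $(x_0,\alpha_0)$ to $(y,\beta)$, sharing no primes. Writing $P_\ell,Q_\ell$ and $P_{\ell'},Q_{\ell'}$ for the products of the $p$- and $q$-primes in each path, the disjointness of the two paths (together with the distinctness within each path) makes these four integers pairwise coprime and coprime to $Q_y$. I form the loop $\ell + (\ell')^{-1}$ of length $2(k_0+2)$, starting and ending at $(x_0,\alpha_0)$ and passing through $(y,\beta)$ at its midpoint, and let $\hat\gamma \in \R/Q_y\Z$ be the top of an associated pyramid.

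Applying Lemma~\ref{morealf} at the start, midpoint, and end of the loop produces the three congruences $P_\ell Q_{\ell'}\hat\gamma \equiv \alpha_0$, $P_\ell P_{\ell'}\hat\gamma \equiv \beta$, $Q_\ell P_{\ell'}\hat\gamma \equiv \alpha_0$ (mod $Q_y$), each with error $\lesssim k_0 K/H$. Subtracting the first and third, $M\hat\gamma \equiv 0 \pmod{Q_y}$ where $M := P_\ell Q_{\ell'}-Q_\ell P_{\ell'}$. Corollary~\ref{pd21} applied to both paths yields $|x_0 P_\ell Q_{\ell'} - yP_\ell P_{\ell'}|, |x_0 Q_\ell P_{\ell'} - yP_\ell P_{\ell'}| \lesssim k_0 H P_\ell P_{\ell'}/K$, whence $|M| \lesssim k_0 H P_\ell P_{\ell'}/X$. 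Substituting the definition of $k_0$ (which forces $P_\ell P_{\ell'} \lesssim P^4 X/(H\log X)$) collapses this to the key estimate $|M| \lesssim P^4/\log P \lesssim H^{4\varepsilon}$, which is $\leq H^\delta$ since $\varepsilon$ is sufficiently small with respect to $\delta$; the coprimality between the paths ensures $M\ne 0$. I may therefore write $\hat\gamma \equiv rQ_y/M + \nu \pmod{Q_y}$ for an integer $r$ with $|\nu| \lesssim k_0 K/(|M|H)$.

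I then set $d_y := |M|$, $a_y := rP_\ell Q_{\ell'} \bmod d_y$, $b_y := rP_\ell P_{\ell'} \bmod d_y$, and $T_y := x_0 P_\ell Q_{\ell'}\,\nu$. Multiplying the decomposition of $\hat\gamma$ by $P_\ell Q_{\ell'}$ and comparing with the first pyramid relation yields \eqref{ff2} directly, with error $\lesssim k_0 K/H \lesssim H^{\delta-1}$. For \eqref{ff28}, multiplying by $P_\ell P_{\ell'}$ gives the rational part $b_y Q_y/d_y$, while the continuous part $P_\ell P_{\ell'}\nu$ equals $T_y/y + \nu(P_\ell P_{\ell'} - x_0 P_\ell Q_{\ell'}/y)$; the parenthetical is controlled via Corollary~\ref{pd21} by $k_0 H P_\ell P_{\ell'}/(Ky)$, producing a residual of size $\lesssim k_0^2 P_\ell P_{\ell'}/(|M|y) \lesssim H^{2\delta-1}$ after substituting the size bounds. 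The conditions $d_y \leq H^\delta$ and $|T_y| \lesssim P_\ell Q_{\ell'} x_0 \cdot k_0 K/(|M|H) \lesssim X^2/H^{2-\delta}$ both follow from the central estimate $|M| \lesssim H^{4\varepsilon}$ together with the sizes of $P_\ell Q_{\ell'}$ and $x_0$.

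The main technical difficulty is establishing the sharp bound $|M| \lesssim H^{4\varepsilon}$: the naive bound from $|M| \le k_0 H P_\ell P_{\ell'}/X$ only gives something polynomial in $H$, and it is only because the calibration of $k_0$ in Lemma~\ref{pairs} keeps $P_\ell P_{\ell'}$ just below $X/H$ in size that we recover a bound sub-polynomial in $H^\delta$. Verifying that the residual error in \eqref{ff28} is genuinely $O(H^{2\delta-1})$ (rather than a larger power) similarly requires $\varepsilon$ to be chosen small relative to $\delta$, which is exactly the hypothesis of Lemma~\ref{pairs}.
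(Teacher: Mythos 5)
Your proposal is correct and is essentially the paper's own argument: you form the loop $\ell+(\ell')^{-1}$, take the top element of an associated pyramid, apply Lemma \ref{alf}--\ref{morealf} at $j=1$, $j=k+1$ and $j=2k+1$, bound the difference of prime products via Corollary \ref{pd21} and the calibration of $k_0$ to get the small denominator, and read off $T_y$ from the continuous error exactly as the paper does. The only differences are cosmetic (taking $d_y=|M|$ rather than a reduced denominator, attaching $T_y$ to $P_\ell Q_{\ell'}$ instead of $Q_\ell P_{\ell'}$, and organizing the comparison of $T_y/x_0$ with $T_y/y$ through $\ell'$ rather than $\ell$), so no further comparison is needed.
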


\begin{proof}
Fix $(y,\beta) \in \mathcal{J}_0$. Write $\ell_1$ and $\ell_2$ for a pair of paths mod $Q_y$ of length $k:=k_0+2$ of the kind provided by Lemma \ref{pairs}. If we write $(p_1,\ldots,p_k, q_1,\ldots,q_k)$ and $(p_1',\ldots,p_k',q_1',\ldots,q_k')$ for the primes in $[P,2P]$ associated to $\ell_1$ and $\ell_2$ respectively, then $\ell_1+(\ell_2)^{-1}$ will be a path mod $Q_y$ of length $2k$ consisting of (distinct) primes $(p_1,\ldots,p_k,q_k',\ldots,q_1',q_1,\ldots,q_k,p_k',\ldots,p_1')$ having $(x_0,\alpha_0)$ as its initial and end point. If $\alpha_y \in \R / Q_y \Z$ is the top element of a pyramid associated to this path, then by Lemma \ref{morealf} applied with $j=1$ and $j=2k+1$ and the triangle inequality, we see that
$$ \| \left( \prod_{i=1}^k p_i q_i' - \prod_{i=1}^k q_i p_i' \right) \alpha_y \|_{Q_y} \lesssim k \frac{K}{H},$$
and therefore
\begin{align*}
 \alpha_y &\equiv \frac{u_y}{d_y} Q_y + O \left( k \frac{K}{H} \right) \, \, (\text{mod }Q_y) \\
 &\equiv \frac{u_y}{d_y} Q_y + \frac{T_y}{x_0 \prod_{i=1}^k q_i p_i'} \, \, (\text{mod }Q_y),
 \end{align*}
for some $T_y \in \R$ that by our choice of $k$ can be taken to satisfy 
$$|T_y| \lesssim  k \frac{K}{H} x_0 \left( \prod_{i=1}^k q_i p_i' \right) \lesssim k \frac{K}{H} \frac{X}{K} \left(\frac{X}{H} (2P)^4 \right) \lesssim \frac{X^2}{H^{2-\delta}},$$
and for some integers $|u_y| \le d_y$ with $d_y$ dividing $\left| \prod_{i=1}^k p_i q_i' - \prod_{i=1}^k q_i p_i'  \right|$. Using Corollary \ref{pd21} as in the deduction of (\ref{old28}) we see that
$$ \left| \prod_{i=1}^k p_i q_i' - \prod_{i=1}^k q_i p_i'  \right| \lesssim \frac{k H}{X} (2P)^{2k} \lesssim k (2P)^4,$$
by our choice of $k$. In particular, we have $|d_y| \lesssim H^{\delta}$. Applying Lemma \ref{morealf} with $j=1$ again, we deduce that we can find some integer $|a_y| \le d_y$ with
$$ \alpha_0 \equiv \left( \prod_{i=1}^k q_i p_i' \right) \alpha_y + O \left( k \frac{K}{H} \right) \equiv \frac{a_y}{d_y} Q_y + \frac{T_y}{x_0} + O \left( H^{\delta-1} \right) \, \, (\text{mod }Q_y).$$
Similarly, applying Lemma \ref{morealf} with $j=k+1$, we obtain an integer $|b_y| \le d_y$ with
$$ \beta \equiv \frac{b_y}{d_y} Q_y + \frac{T_y}{x_0} \frac{\prod_{i=1}^k p_i p_i' }{ \prod_{i=1}^k q_i p_i'} +  O \left( H^{\delta-1} \right) \, \, (\text{mod }Q_y).$$
Removing the primes $p_1', \ldots, p_k'$ appearing in both numerator and denominator, and using that by Lemma \ref{uni} and Corollary \ref{pd21} we have
$$ \left| \frac{T_y}{x_0} \frac{\prod_{i=1}^k p_i }{ \prod_{i=1}^k q_i} - \frac{T_y}{y} \right| \lesssim \frac{|T_y|}{(X/K)^2} \left|y- \left(\prod_{i=1}^k q_i/p_i \right) x_0 \right| \lesssim H^{2 \delta-1},$$
we obtain the result.
\end{proof}

We can now conclude the proof of Theorem \ref{3}. Let $\delta >0$ be sufficiently small and let the notation be as in Lemma \ref{pairs}. By Cauchy-Schwarz and the pigeonhole principle, we know from Lemma \ref{pairs} that can find some $c \sim 1$ and $(y_0,\beta_0) \in \mathcal{J}_0$ such that 
$$\tilde{Q}_y := \text{gcd}(Q_{y_0},Q_y) \ge (P')^{c^{k_0}  \frac{P'}{\log P'}} \ge \exp(c^{k_0} P') $$
 for $\ge c^{k_0} |J_0| \gtrsim H^{-\delta} |J_0|$ elements $(y,\beta) \in \mathcal{J}_0$. If we define $T := T_{y_0}$, $Q := Q_{y_0}$, $a := a_{y_0}$ and $d=d_{y_0}$, we see from (\ref{ff2}) that for each such $(y,\beta) \in \mathcal{J}_0$ we have
 $$ \alpha_0 \equiv \frac{a_y}{d_y} Q_y + \frac{T_y}{x_0} +  O(H^{\delta-1}) \equiv \frac{a}{d} Q + \frac{T}{x_0} + O(H^{\delta-1}) \, \, (\text{mod }\tilde{Q}_y).$$
 From our bounds on the individual quantities involved, it follows that it must necessarily be 
 $$  \frac{a_y}{d_y} Q_y \equiv \frac{a}{d} Q \, \, (\text{mod }\tilde{Q}_y),$$
 and therefore it must also be $|T - T_y| \lesssim |x_0|/H^{1-\delta}$. We then conclude from (\ref{ff28}) that
 \begin{equation}
 \label{llu2}
  \beta \equiv \frac{b_y}{d_y} Q_y + \frac{T}{y} +  O(H^{2\delta-1}) \, \, (\text{mod }\tilde{Q}_y).
  \end{equation}
 By the pigeonhole principle, it follows that there is some $q \lesssim H^{\delta}$ such that (\ref{llu2}) holds with $d_y = q$ for $\gtrsim H^{-2 \delta} |\mathcal{J}_0| \gtrsim X/H^{1+3 \delta}$ elements $(y,\beta) \in \mathcal{J}$. For each such $(y,\beta)$, we know by construction of $\mathcal{J}$ and the triangle inequality that we can find some interval $[z,z+H^*] \subseteq [y,y+H/K]$ of length $H^* := H^{1-3 \delta}$ with
 $$ \sum_{z \le n < z+H^*} g(n) e(\beta n) \gtrsim H^*.$$
 Since $|T/y-T/z| \lesssim H^{2\delta-1}$, it then follows that
 $$ \sum_{z \le n \le z+H^*} g(n) e(\frac{(n-z)b_y}{q} Q_y + \frac{(n-z)T}{z}) \gtrsim H^*.$$
 Choosing $\delta$ sufficiently small with respect to $\rho$, this concludes the proof of Theorem \ref{3} and therefore of Theorem \ref{1}.


\begin{thebibliography}{10}
\bibitem{GS} A. Granville, K. Soundararajan. \emph{Large character sums: pretentious characters and the Polya-Vinogradov theorem}. J. Amer. Math. Soc., {\bf 20} (2) (2007), 357-384.
\bibitem{MR} K. Matomäki, M. Radziwi\l\l. \emph{Multiplicative functions in short intervals}. Ann. of Math. (2), {\bf 183} (3) (2016), 1015-1056.
\bibitem{MR2} K. Matomäki, M. Radziwi\l\l. \emph{Multiplicative functions in short intervals II}, arXiv:2007.04290.
\bibitem{MRT2}  K. Matomäki, M. Radziwi\l\l, T. Tao. \emph{An averaged form of Chowla's conjecture}. Algebra \& Number Theory, {\bf 9} (9) (2015), 2167-2196.
\bibitem{MRT} K. Matomäki, M. Radziwi\l\l, T. Tao. \emph{Fourier uniformity of bounded multiplicative functions in short intervals on average}, Invent. Math., {\bf 220} (1) (2020) 1-58.
\bibitem{MRTTZ} K. Matomäki, M. Radziwi\l\l, T. Tao, J. Teräväinen, T. Ziegler. \emph{Higher uniformity of bounded multiplicative functions in short intervals on average}, Ann. Math. (2) {\bf 197} (2) (2023), 739-857.
\bibitem{T} T. Tao. \emph{Equivalence of the logarithmically averaged Chowla and Sarnak conjectures}. In Number theory - Diophantine problems, uniform distribution and applications, 391-421. Springer, 2017.
\bibitem{W} M. Walsh, \emph{Local uniformity through larger scales}, Geom. Funct. Anal. {\bf 31} (2021), 981-991.
\end{thebibliography}
\end{document}